\def\Xint#1{\mathchoice
{\XXint\displaystyle\textstyle{#1}}%
{\XXint\textstyle\scriptstyle{#1}}%
{\XXint\scriptstyle\scriptscriptstyle{#1}}%
{\XXint\scriptscriptstyle%
\scriptscriptstyle{#1}}%
\!\int}
\def\XXint#1#2#3{{\setbox0=\hbox{$#1{#2#3}{%
\int}$ }
\vcenter{\hbox{$#2#3$ }}\kern-.6\wd0}}
\def\barint{\, \Xint -} 
\def\bariint{\barint_{} \kern-.4em \barint}
\def\bariiint{\bariint_{} \kern-.4em \barint}
\renewcommand{\iint}{\int_{}\kern-.34em \int} 
\renewcommand{\iiint}{\iint_{}\kern-.34em \int} 
\DeclareMathAlphabet{\mathcal}{OMS}{cmsy}{m}{n}
\theoremstyle{plain}
\newtheorem{theorem}{Theorem}[section]
\newtheorem{lemma}[theorem]{Lemma}
\newtheorem{proposition}[theorem]{Proposition}
\theoremstyle{definition}
\newtheorem{remark}[theorem]{Remark}
\newcommand{\R}{\mathbb{R}}
\newcommand{\C}{\mathbb{C}}
\newcommand{\N}{\mathbb{N}}
\newcommand{\data}{{\rm data}}
\newcommand{\p}{\partial}
\newcommand{\les}{\lesssim}
\newcommand{\norm}[1]{\lVert #1 \rVert}
\renewcommand{\:}{\colon}
\renewcommand{\ss}{{\rm ss}}
\let\div\relax
\DeclareMathOperator{\div}{div}
\DeclareMathOperator{\curl}{curl}
\renewcommand{\L}{\bm{L}}
\newcommand{\BS}{{\rm BS}}
\let\Re\relax
\DeclareMathOperator{\Re}{Re}
\numberwithin{equation}{section}
\title[Non-uniqueness of Leray solutions with hypodissipation]{Non-uniqueness of Leray solutions to the hypodissipative Navier-Stokes equations in two~dimensions}
\author[Albritton]{Dallas Albritton}
\address{School of Mathematics, Institute for Advanced Study, 1 Einstein Dr., Princeton, NJ 08540, USA}
\email{dallas.albritton@ias.edu}
\author[Colombo]{Maria Colombo}
\address{EPFL SB, Station 8, 
CH-1015 Lausanne, Switzerland
}
\email{maria.colombo@epfl.ch}
\date{\today}
\begin{document}

\begin{abstract}
We exhibit non-unique Leray solutions of the forced Navier-Stokes equations with hypodissipation in two dimensions.
Unlike the solutions constructed in~\cite{albritton2021non}, the solutions we construct live at a supercritical scaling, in which the hypodissipation formally becomes negligible as $t \to 0^+$. In this scaling, it is possible to perturb the Euler non-uniqueness scenario of Vishik~\cite{Vishik1,Vishik2} to the hypodissipative setting at the nonlinear level.  Our perturbation argument is quasilinear in spirit and circumvents the spectral theoretic approach to incorporating the dissipation in~\cite{albritton2021non}.
\end{abstract}

\keywords{Fractional Navier–Stokes equation, non-uniqueness, self-similarity} 

\subjclass{35Q30, 35Q35, 35R11}

\maketitle
\tableofcontents

\parskip   2pt plus 0.5pt minus 0.5pt

\section{Introduction}

We consider the two-dimensional fractional Navier-Stokes equations 
\begin{equation}\label{eq:NS}
\tag{NS$_\beta$}
\left\lbrace
\begin{aligned}
\partial_t u + (u \cdot \nabla) u + \nabla p+\Lambda^\beta u &= f  \\
\div u &= 0
\end{aligned}
\right.
\quad \text{ in } \R^2 \times \R_+
\end{equation}
where $\beta \in (0,2]$ and $\Lambda^\beta =  (-\Delta)^{\beta/2}$ is the fractional Laplacian. 
We consider the Cauchy problem for~\eqref{eq:NS} with divergence-free initial velocity
\begin{equation}\label{e:Cauchy}
u (\cdot, 0) = u_0 \in L^2(\R^2).
\end{equation}

It is not difficult to adapt the seminal work~\cite{leray} of Leray to demonstrate the existence of a global-in-time \emph{Leray-Hopf solution} for each divergence-free initial velocity $u_0 \in L^2(\R^2)$ and body force $f \in L^1_t L^2_x(\R^2 \times \R_+)$. These are distributional solutions $u \in L^\infty_t L^2_x \cap L^2_t \dot H^{\frac{\beta}{2}}_x(\R^2 \times \R_+)$ to~\eqref{eq:NS} satisfying  $\| u(\cdot,t) - u_0 \|_{L^2(\R^2)} \to 0$ as $t \to 0^+$ and the \emph{energy inequality} for all $t >0$:
\begin{equation}\label{e:energy_ineq}
\frac{1}{2}\int_{\R^2} |u(x,t)|^2 \, dx + \int_0^t \int_{\R^2} |\Lambda^{\frac{\beta}{2}} u|^2 \, dx\, ds \leq \frac{1}{2}\int_{\R^3} | u_0(x) |^2 \, dx + \int_0^t \int_{\R^2} f \cdot u \, dx \, ds \, .
\end{equation}

It is well known that Leray's solutions are unique in the classical setting $\beta = 2$. In this paper, we demonstrate non-uniqueness when $\beta < 2$:

\begin{theorem}[Non-uniqueness of Leray-Hopf solutions]
\label{thm:main}
For all $\beta \in (0,2)$, there exist a force $f \in L^1_t L^2_x(\R^2 \times \R_+)$
and two distinct Leray-Hopf solutions, $u$ and $\bar{u}$, with zero initial velocity $u_0 \equiv 0$ and force $f$:
\begin{equation}
	u \neq \bar{u} \text{ on } \R^2 \times (0,T) \text{ for all } T > 0 \, .
\end{equation}
\end{theorem}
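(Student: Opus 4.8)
The plan is to implement a \emph{quasilinear perturbation} of Vishik's Euler non-uniqueness scenario, working in a supercritical similarity scaling in which the hypodissipation $\Lambda^\beta$ is subcritical with respect to the natural scaling of the problem (so that it is formally negligible as $t \to 0^+$). Concretely, I would introduce the similarity variables $\xi = x/\sqrt{t}$ (or, more precisely, the scaling adapted to the degree of homogeneity of Vishik's unstable vortex, $x/t^{1/\alpha}$ for the appropriate $\alpha$ determined by the force), and write $u(x,t) = t^{a} U(\xi, \log t)$ with $a$ chosen so that the Euler part of the equation becomes autonomous in the slow time $\tau = \log t$. In these variables \eqref{eq:NS} becomes an equation of the form $\partial_\tau U - \mathcal{L}_{ss} U + (\text{Euler nonlinearity}) + t^{\beta/(\text{scaling})}\Lambda^\beta U = F$, where the crucial point of the supercritical choice is that the coefficient $t^{\gamma}$ in front of $\Lambda^\beta U$ carries a \emph{positive} power of $t$, hence tends to $0$ as $\tau \to -\infty$. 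Thus the dissipative term is a genuinely lower-order perturbation near the singular time, and one can hope to carry over the Euler construction without having to diagonalize the full linearized operator including dissipation (which was the spectral-theoretic crux of \cite{albritton2021non}).

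The key steps, in order: (1) Recall/record the properties of Vishik's unstable self-similar Euler profile $\bar\Omega$ and the associated linearized operator $L_{ss}$ around it: there is an unstable eigenvalue with positive real part and a smooth, rapidly decaying eigenfunction; this furnishes an exponentially growing mode $e^{\lambda \tau}\eta$ of the linearized Euler dynamics. (2) Build the background solution: the radially symmetric, forced self-similar Euler solution lifts to an exact solution of \eqref{eq:NS} (or to an approximate one up to a controllably small, explicitly computable error coming from $\Lambda^\beta$ acting on the profile), and one checks it is a Leray-Hopf solution with the stated regularity $L^\infty_t L^2_x \cap L^2_t \dot H^{\beta/2}_x$ and zero initial data, with force $f \in L^1_t L^2_x$. (3) Set up the perturbation equation for $v = u - u_{\rm background}$ in similarity variables; the linear part is $\partial_\tau v = L_{ss} v + t^{\gamma}\Lambda^\beta v + (\text{small})$, and the nonlinear part is the quadratic Euler term $v \cdot \nabla v$ (plus cross terms). (4) Construct the unstable trajectory: start from $v(\tau_0) = \delta e^{\lambda \tau_0}\eta$ for $\tau_0 \to -\infty$ and run a fixed-point/bootstrap argument in a norm that tracks the growth rate $e^{(\Re\lambda)\tau}$; the quadratic nonlinearity and the $t^\gamma \Lambda^\beta$ term are both higher order in the small parameter $\delta e^{\lambda\tau_0}$ or in $t^\gamma$, so they close in the iteration. (5) Translate back to physical variables: the resulting $u$ and the background $\bar u$ are two distinct Leray-Hopf solutions with the same force and zero initial data, and they differ on every time interval $(0,T)$ because $v \not\equiv 0$ for $\tau$ near $-\infty$.

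Two technical points deserve emphasis as the real work. First, one must verify that the hypodissipation does \emph{not} destroy the unstable eigenvalue of the Euler linearization, and more importantly that it does not spoil the \emph{Leray-Hopf class}: the similarity profile must have enough spatial decay/integrability that $\Lambda^{\beta/2}u \in L^2_{t,x}$ globally in time and the energy inequality \eqref{e:energy_ineq} holds (with the force absorbed), which constrains the admissible range of scaling exponents and is precisely where the restriction $\beta < 2$ (and the supercriticality) enters. Second, and this is what I expect to be \textbf{the main obstacle}: closing the nonlinear iteration in the presence of the non-autonomous, non-local term $t^{\gamma}\Lambda^\beta v$. Because $\Lambda^\beta$ is a nonlocal operator of positive order, one cannot treat it as a bounded perturbation on the eigenspace; the quasilinear strategy is to absorb it using parabolic-type smoothing of the semigroup $e^{\tau L_{ss}}$ restricted away from the unstable subspace, combined with the decaying prefactor $t^\gamma$, in a carefully chosen scale of spaces (e.g.\ weighted Sobolev spaces tailored to the decay of $\bar\Omega$) so that the gain from $t^\gamma$ beats the loss of derivatives from $\Lambda^\beta$ and from the transport term $U_{\rm background}\cdot\nabla v$. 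Getting a norm that is simultaneously (a) strong enough to control the Euler nonlinearity, (b) compatible with the unstable-mode growth rate, and (c) stable under $\Lambda^\beta$ with its $t^\gamma$ prefactor is the heart of the argument, and is the sense in which the proof is ``quasilinear in spirit.''
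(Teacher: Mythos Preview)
Your high-level outline matches the paper almost exactly: supercritical similarity scaling with $\alpha>\beta$ so that the dissipation carries the prefactor $e^{\gamma\tau}$, $\gamma>0$; Vishik's unstable vortex $\bar\Omega$ with Euler eigenpair $(\lambda,\eta)$; background force defined so that $\bar u$ is an exact solution; ansatz $\Omega=\bar\Omega+\Omega^{\rm lin}+\Omega^{\rm per}$ and a bootstrap/limit argument to build the second solution. The paper also builds $u$ as a limit of solutions launched at times $t_k=e^{-k}$ with data $\bar u+u^{\rm lin}$, rather than a direct fixed point, but this is a cosmetic difference.

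Where your proposal has a genuine gap is precisely in the mechanism you suggest for the ``main obstacle.'' You propose to absorb $e^{\gamma\tau}\Lambda^\beta$ via \emph{parabolic-type smoothing of the semigroup $e^{\tau L_{ss}}$}. That semigroup is inviscid transport plus a compact perturbation and has no smoothing; there is nothing parabolic to borrow from. The paper instead builds a \emph{modified} solution operator $S[W]$ that already contains both the full transport $(\bar U+U^{\rm lin}+W)\cdot\nabla$ and the dissipation $e^{\gamma\tau}\Lambda^\beta$, and estimates $S[W]$ in a bespoke norm
\[
\|\Omega\|_X=\|\Omega\|_{L^Q\cap L^2}+\|\partial_\theta\Omega\|_{L^Q\cap L^4}+\|\nabla\Omega\|_{L^2\cap L^4}
\]
via a hierarchy of energy estimates. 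The semigroup bound for $e^{\tau L_{ss}}$ is used only at the $L^2$ level (where it is allowed to lose derivatives to $(U^{\rm lin}+W)\cdot\nabla\Omega$ and $e^{\gamma\tau}\Lambda^\beta\Omega$); derivatives are then \emph{regained} by direct energy estimates exploiting the vortex structure of $\bar U=V(r)e_\theta$: (i) $\partial_\theta$ commutes with $\bar U\cdot\nabla$ and with $\Lambda^\beta$, so $\partial_\theta\Omega$ can be propagated first; (ii) when passing to the full gradient, the dangerous stretching term $\partial_i\bar U\cdot\nabla\Omega$ splits as a $\partial_\theta\Omega$ piece (already controlled) plus $\zeta(|x|)\nabla^\perp\Omega$, which vanishes against $\nabla\Omega$ in the energy estimate. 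Crucially, one also takes the unstable eigenvalue with $\Re\lambda\ge a_0$ \emph{large} (this freedom is available in Vishik's construction) so that the target growth rate $e^{(a+\delta)\tau}$ dominates the linear-in-$\tau$ growth constants arising from the energy hierarchy; without this, the bootstrap does not close. None of these three ingredients---the $\partial_\theta$-first commutation, the cancellation $\nabla^\perp\Omega\cdot\nabla\Omega=0$, and the freedom to take $a_0$ large---appears in your proposal, and they are what actually replaces the (nonexistent) parabolic smoothing you were hoping for.
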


The solutions are constructed explicitly and satisfy many desirable properties, among which is smoothness on $\R^2 \times (0,T)$ for $T \ll 1$ and energy equality in~\eqref{e:energy_ineq}.

In a recent work~\cite{albritton2021non} with Bru{\'e}, we constructed non-unique Leray solutions of the three-dimensional Navier-Stokes equations. While the method therein is also applicable to~\eqref{eq:NS}, at least when $\beta > 1$, our goal is to highlight a different but equally natural way of incorporating viscosity into the scheme.

The main idea of the proof is to perturb the examples of non-uniqueness discovered by Vishik in~\cite{Vishik1,Vishik2} to the viscous setting.
To begin, we recall the basic dimensional analysis for the Euler equations
\begin{equation}
    \label{eq:evelocity}
	\p_t u + u \cdot \nabla u + \nabla p = f \, , \quad \div u = 0 \, ,
\end{equation}
which can be equivalently rewritten in vorticity formulation as
\begin{equation}
    \label{eq:evorticity}
    \p_t \omega + u \cdot \nabla \omega = g \, , \quad u = \Delta^{-1} \nabla^\perp \omega \, ,
\end{equation}
where $\omega =\curl u$ is the vorticity and  $g = \curl f$.
They have a two-parameter scaling symmetry
\begin{equation}
	u_{\mu,\eta} = \frac{\eta}{\mu} u(\mu x, \eta t) \,, \qquad 	\omega_{\mu,\eta} = \eta u(\mu x, \eta t)
\end{equation}
corresponding to the physical dimensions $[x] = L$, $[t] = T$, $[u] = L/T$, $[\omega] = 1/T$, $[f] = L/T^2$, and $[g] = 1/T^2$.
We suppose a relationship $T = L^\alpha$ between the dimensions. This yields a one-parameter scaling symmetry:
\begin{equation}
	u_\lambda = \lambda^{\alpha - 1} u(\lambda x, \lambda^\alpha t) \, , \qquad
	\omega_\lambda = \lambda^\alpha u(\lambda x, \lambda^\alpha t) \, .
\end{equation}
One may seek special solutions $\bar{\omega}$ which are invariant under the above symmetry and necessarily have the form
\begin{equation}
   \label{eq:baromegadef}
	\bar{\omega}(x) = \frac{1}{t} \bar{\Omega} (\xi) \, ,
\end{equation}
where $\xi = x/t^{\frac{1}{\alpha}}$. In fact, we may analyze an arbitrary Euler solution $\omega$ in \emph{similarity variables} $(\xi,\tau)$, where $\tau = \log t$ and the \emph{similarity profile} $\Omega$ is defined by
\begin{equation}
    \label{eq:omegadef}
	\omega(x,t) = \frac{1}{t} \Omega (\xi,\tau) \,, \qquad  g(x,t) = \frac{1}{t^2} G (\xi,\tau) \,.
\end{equation}
The Euler equations~\eqref{eq:evorticity} in similarity variables are
\begin{equation}
    \label{eq:similarityeuler}
	\p_\tau \Omega + (-1- \frac{\xi}{\alpha} \cdot \nabla) \Omega + U \cdot \nabla \Omega = G \, , \quad U = \nabla^\perp \Delta^{-1} \Omega \, .
\end{equation}
Self-similar solutions $\bar{\omega}$ correspond precisely to steady states $\bar{\Omega}$ in the new variables.

If $\bar{\Omega}$ is an unstable steady state of~\eqref{eq:similarityeuler}, then it is natural to seek its \emph{unstable manifold}. Suppose that $\Omega^{\rm lin}$ 
is an exponentially growing solution of the linearization of~\eqref{eq:similarityeuler} around $\bar{\Omega}$. 
One seeks a trajectory
\begin{equation}
    \label{eq:basicansatz}
    \Omega = \bar{\Omega} + \Omega^{\rm lin} + \Omega^{\rm per} \, ,
\end{equation}
where $\Omega^{\rm per} = o_{\tau \to -\infty}(|\Omega^{\rm lin}|)$  ensures that $\Omega$ is not identically equal to $\bar{\Omega}$. On the other hand, $\Omega \overset{\tau \to -\infty}{\longrightarrow} \bar{\Omega}$. This is the non-uniqueness scenario in~\cite{Vishik1}, which demonstrated the sharpness of the Yudovich class~\cite{Yudovich1963} in the `forced category'.



How does the above non-uniqueness scenario look with viscosity? The fractional Navier-Stokes equations~\eqref{eq:NS} in self-similarity variables are
\begin{equation}
    \label{eq:NSss}
	\p_\tau \Omega + (-1-\xi \cdot \nabla_\xi / \alpha) \Omega + U \cdot \nabla \Omega + e^{\tau \gamma} \Lambda^\beta \Omega = G \, , \quad U = \nabla^\perp \Delta^{-1} \Omega \, ,
\end{equation}
where
\begin{equation}
	\gamma = 1-\frac{\beta}{\alpha} \, .
\end{equation}

The traditional choice of exponent is $\alpha = \beta$, so that the PDE~\eqref{eq:NSss} is $\tau$-autonomous. (The choice $\alpha = \beta = 2$ in~\cite{jiasverakselfsim,albritton2021non} is the classical Navier-Stokes scaling.) This choice is determined by the dimensions of the viscosity: $[\nu] = L^\beta/T$. With this choice, it is possible to follow the strategy in~\cite{albritton2021non} to prove Theorem~\ref{thm:main} when $\beta \in (1,2)$.  The same strategy would be valid in dimension three with $\beta \in (1,5/2)$, that is, up to the Lions exponent. 
The most delicate part of the analysis in~\cite{albritton2021non} is an eigenvalue perturbation argument which shows that not only the linearized $2$D Euler equations, but also the linearized $3$D Navier-Stokes equations admit an unstable eigenvalue around a well chosen background solution.

In this paper, we instead consider $\alpha > \beta$, so that $\gamma > 0$ and the dissipation term $e^{\tau \gamma} \Lambda^\beta$ becomes perturbative (formally) as $\tau \to -\infty$. Then we consider the ansatz~\eqref{eq:basicansatz} exactly as in the inviscid setting, in particular, choosing $\Omega^{\rm lin}$ as an unstable mode for $2$D Euler rather than Navier-Stokes, and study $\Omega^{\rm per}$. We take the existence of an unstable inviscid self-similar solution for granted.\footnote{Indeed, the majority of the present work was actually written before we realized that the spectral problem in~\cite{albritton2021non} was manageable. This work is independent from~\cite{albritton2021non} except for the observation that Vishik's vortex can be truncated.}  In this way, we turn our attention away from the spectral problem at the heart of~\cite{albritton2021non} and toward the nonlinear arguments, which become quasilinear in nature and inherently more difficult. The nonlinear scheme will be outlined in Section~\ref{sec:strategy}.

Our strategy can be compared to analogous strategies in the singularity formation literature, which seek \emph{stable manifolds} as $\tau \to +\infty$ of (potentially unstable) steady states in backward self-similarity variables. In contrast, we seek \emph{unstable manifolds} as $\tau \to -\infty$ of unstable steady states in forward self-similarity variables. Notably, the three-dimensional compressible Navier-Stokes `implosion' singularity in~\cite{merle2019implosion} is a perturbation of a compressible Euler singularity; in similarity variables, the viscous term appears with a factor $e^{-\tau \gamma}$. See~\cite{oh2021gradient,chickering2021asymptotically} for perturbative dissipation in the fractional Burgers equation. 

The above comparison highlights the following point: In the absence of forcing, the desired self-similar solutions and  corresponding exponents are \emph{chosen for you}. When the scaling of the self-similar solution is supercritical with respect to the scaling of the viscous equation, the formal calculations give hope to perturb inviscid solutions to viscous solutions. For this reason, we choose to work with general (supercritical) exponents below. 
The present work provides another general method
to incorporate a supercritical viscosity into non-uniqueness scenarios; we hope that this will inform future studies on non-uniqueness scenarios and continuation past singularities of various PDEs in fluid mechanics.


\emph{Previous work}. The important works~\cite{jiasverakselfsim,jiasverakillposed,guillodsverak} proposed a route, accompanied by significant numerical evidence, to the (yet unproven) non-uniqueness of Leray solutions  \emph{without force}. The unstable manifold in similarity variables is already in~\cite[Theorem 4.1]{jiasverakillposed}. A related program for the two-dimensional Euler equations was initiated in~\cite{BressanAposteriori,BressanSelfSimilar}. Non-uniqueness of weak Navier-Stokes solutions was demonstrated via convex integration in~\cite{BuckmasterVicolAnnals}. With hypoviscosity $\beta < 2/3$, convex integration was able to achieve the Leray class ~\cite{mariahypodissipativeonefifth,DeRosa19}. See~\cite{albritton2021non} for a more comprehensive review.

Finally, we recently demonstrated non-uniqueness of Leray solutions to the forced Navier-Stokes solutions in bounded domains via a gluing method~\cite{albritton2022gluing}.

\section{Preliminaries}
\label{sec:preliminaries}

Let $ 0 < \beta < \alpha < 2$. 
Given a compactly supported, smooth velocity field $\bar{U} \in C^\infty_0(B_2;\R^2)$, $\bar{U}(x) = V(r) e_\theta$, with corresponding vorticity $\bar{\Omega} = \curl \bar{U}$, we define the linearized operator around $\bar{\Omega}$,
\begin{equation}
	- \L_{\ss} \Omega = \left( - 1 - \frac{\xi}{\alpha} \cdot \nabla \right) \Omega + \bar{U} \cdot \nabla \Omega + U \cdot \nabla \bar{\Omega} \, ,
\end{equation}
where $U = \nabla^\perp  \Delta^{-1} \Omega$.
 The above operator is considered $\L_\ss \: D(\L_\ss) \subset L^2_m \to L^2_m$, where $L^2_m \subset L^2$ consists of $m$-fold rotationally symmetric $L^2$ functions and $D(\L_\ss) := \{ \Omega \in L^2_m : \L_\ss \Omega \in L^2 \}$.

The following proposition was essentially demonstrated in~\cite{Vishik1,Vishik2,OurLectureNotes,albritton2021non}

\begin{proposition}[Unstable vortex]\label{pro:spectral}
Let $a_0 \geq 0$. There exists a compactly supported, smooth velocity field $\bar{U} \in C^\infty_0(B_2;\R^2)$, $\bar{U}(x) = V(r) e_\theta$, with corresponding vorticity $\bar{\Omega} = \curl \bar{U}$ and satisfying the following properties.

1. \textbf{Linear instability}. There exists an integer $m\geq 2$ such that $\L_{\ss}$ has an unstable eigenvalue. More specifically, there exist $\lambda \in \C$ with $\Re \lambda =: a \geq a_0$ and $\eta\in L^2_m\setminus \{0\}$ satisfying $\L_{\rm ss} \eta = \lambda \eta$.

2. \textbf{Semigroup estimate}.
The operator $\L_{\ss}$ generates a continuous semigroup in $L^2_m$, and for all $\delta > 0$, we have the semigroup estimate
	\begin{equation}\label{eqn:semigr}
	\norm{e^{\tau\L_\ss}}_{L^2_m \to L^2} \les_\delta e^{(a+\delta) \tau} \text{ for all } \tau \geq 0 \, .
	\end{equation}

3. \textbf{Eigenfunction estimates}. Any eigenfunction $\eta$ with unstable eigenvalue $\lambda$ is compactly supported in $B_1$. For any $k \in \N_0$ with $\Re \lambda > k/\alpha - 1$, the above eigenfunction belongs to $H^k$, and its velocity field $\BS[\eta]$ belongs to $H^{k+1}$.
\end{proposition}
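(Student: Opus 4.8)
The proof of Proposition~\ref{pro:spectral} combines the construction of Vishik's unstable vortex with standard semigroup theory, so I would organize it into three corresponding parts.

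\textbf{Part 1 (Linear instability).} The plan is to invoke the construction in~\cite{Vishik1,Vishik2} (see also the account in~\cite{OurLectureNotes,albritton2021non}). One starts from the vorticity $\bar\Omega$ supported in $B_1$ with a radial profile chosen so that, after passing to polar coordinates and separating variables $\eta = \sum_m \eta_m(r) e^{im\theta}$, the linearized operator on the $m$-th Fourier mode has an unstable eigenvalue; the key mechanism is that for a suitable background and large $m$ there is an eigenvalue with positive real part, and by exploiting the two-parameter scaling symmetry (equivalently, a rescaling of the profile in $r$ and amplitude) one boosts the real part above any prescribed threshold $a_0$. Concretely, given the inviscid $2$D Euler similarity operator, $\mathbf{L}_{\ss}$ differs from the classical Euler linearization around a vortex only by the zeroth-order term $(-1-\xi\cdot\nabla/\alpha)$, which shifts the spectrum by a real constant depending on $\alpha$ and the homogeneity; hence existence of an unstable eigenvalue for the Euler part (Vishik's theorem) transfers directly. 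The truncation to $\bar U \in C^\infty_0(B_2)$ — the one new observation needed beyond~\cite{albritton2021non} — is handled by noting that the eigenfunction is compactly supported, so modifying $\bar U$ outside its support does not affect the eigenpair.

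\textbf{Part 2 (Semigroup estimate).} I would first show $\mathbf{L}_{\ss}$ generates a strongly continuous semigroup on $L^2_m$. Write $-\mathbf{L}_{\ss} = A + B$ where $A\Omega = (-1-\xi\cdot\nabla/\alpha)\Omega + \bar U\cdot\nabla\Omega$ is the transport part and $B\Omega = U\cdot\nabla\bar\Omega$ with $U = \BS[\Omega]$. The operator $A$ generates an explicit semigroup via the (damped) flow of the divergence-free field $-\xi/\alpha + \bar U$: along characteristics one gets $\|e^{\tau A}\|_{L^2\to L^2}\le e^{c\tau}$ for an explicit $c$ (coming from the $\dim/\alpha$ weight in the transport term), and $A$ preserves $L^2_m$. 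Since $\bar\Omega \in C^\infty_0$ and Biot–Savart $\BS\colon L^2\to H^1_{\rm loc}$ with decay, $B$ is a bounded (indeed compact) operator $L^2_m\to L^2_m$; by the bounded perturbation theorem $A+B$ generates a $C_0$-semigroup. For the growth bound~\eqref{eqn:semigr}, the cleanest route is: the essential spectrum of $\mathbf{L}_{\ss}$ is governed by the transport part $A$ (compact perturbation does not move essential spectrum), so $s_{\rm ess}(\mathbf{L}_{\ss})$ lies in a fixed half-plane $\{\Re z \le c\}$ with $c < a$ once the background amplitude is taken large (this is where Part~1's boosting also pushes $a$ well above the transport threshold); outside this half-plane the spectrum consists of finitely many eigenvalues of finite multiplicity. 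One then concludes via the spectral mapping theorem for the point spectrum together with a resolvent bound. If a soft argument is preferred, one can instead quote the corresponding statement from~\cite{albritton2021non} verbatim, since the operator there differs only in the constant in the scaling term and in the dimension.

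\textbf{Part 3 (Eigenfunction estimates).} Compact support in $B_1$: from $\lambda\eta = \mathbf{L}_{\ss}\eta$, outside $B_2$ (where $\bar U, \bar\Omega$ vanish) the equation reduces to $(\lambda + 1 + \xi\cdot\nabla/\alpha)\eta = 0$, a pure dilation equation whose only $L^2$ solution is $\eta\equiv 0$; a unique-continuation / ODE argument along rays then propagates this back to $\{|\xi|\ge 1\}$ using that $\bar\Omega$ is supported in $B_1$ and $U\cdot\nabla\bar\Omega$ vanishes there. Regularity: on the eigenvalue equation, $\xi\cdot\nabla\eta$ and $\bar U\cdot\nabla\eta$ are first-order, and $\BS[\eta]\cdot\nabla\bar\Omega$ is smooth-times-$(H^1$-of-$\eta)$; a bootstrap in $H^k$ works as long as the scaling term $-\xi\cdot\nabla\eta/\alpha$, which at top order competes with $(1+\Re\lambda)\eta$, is controlled — this is exactly the condition $\Re\lambda > k/\alpha - 1$, which guarantees the relevant weighted estimate closes. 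Finally $\BS[\eta]\in H^{k+1}$ follows from elliptic regularity for $\Delta^{-1}\nabla^\perp$ together with the compact support of $\eta$. The main obstacle, conceptually, is Part~1 — one must either reconstruct or carefully cite Vishik's delicate instability analysis — but since the excerpt explicitly allows taking the unstable inviscid self-similar solution for granted, the real content here is verifying that Vishik's eigenpair survives the zeroth-order modification and the truncation, and that the semigroup bound in Part~2 is uniform down to the sharp exponent $a+\delta$.
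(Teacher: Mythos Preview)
Parts 1 and 2 are handled in the paper simply by citation to \cite[Theorem~2.4.2]{OurLectureNotes} and \cite[Proposition~2.2]{albritton2021non} (so the compact-support truncation is \emph{from} \cite{albritton2021non}, not beyond it); your sketches there are reasonable but more than the paper actually does. The substantive content is Part~3, and here your route genuinely differs from the paper's. You propose an Eulerian $H^k$ bootstrap on the eigenvalue equation; the paper instead uses a Lagrangian representation of the resolvent. Treating $\BS[\eta]\cdot\nabla\bar\Omega$ as forcing and writing $\lambda'=\lambda+1$, the paper expresses
\[
\eta = -\int_0^{+\infty} e^{-\lambda' t}\,(\BS[\eta]\cdot\nabla\bar\Omega)\circ X_t\,dt,
\]
where $X_t$ is the flow of $\xi/\alpha - \bar U(\xi)$. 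The explicit polar-coordinate form of $X_t$ yields $|\nabla^k X_t|\les_k e^{kt/\alpha}$, so differentiating under the integral gives $\|\eta\|_{H^k}\les\|\BS[\eta]\cdot\nabla\bar\Omega\|_{H^k}\les\|\eta\|_{H^{k-1}}$ precisely when $\Re\lambda' > k/\alpha$. This buys a clean identification of the threshold as the order-$k$ growth rate of the flow map and avoids tracking commutators of $\nabla^k$ with $\bar U\cdot\nabla$; your Eulerian scheme should also close, but it requires those commutators and the sign of the $\xi\cdot\nabla$ term in the energy identity to be handled explicitly, and ``the relevant weighted estimate closes'' does not yet say how. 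On compact support, the paper's argument matches your first step (the dilation ODE forces $\eta\equiv 0$ outside $\operatorname{supp}\bar U$) but stops at $B_2$; your further propagation to $\{|\xi|\ge 1\}$ relies on $\operatorname{supp}\bar\Omega\subset B_1$, which is not among the stated hypotheses.
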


\begin{proof}
1. and 2. above are contained in~\cite[Theorem 2.4.2]{OurLectureNotes} except for the compact support property of $\bar{U}$, which was pointed out in~\cite[Proposition 2.2]{albritton2021non}. Therefore, we focus on \textbf{3. Eigenfunction estimates}. We present an alternative to the approach in~\cite[Lemma 5.0.3]{OurLectureNotes}, where it was shown that $\eta \in W^{2,\infty}$. 
Suppose that $\eta \in D(\L_{\rm ss})$ is an eigenfunction with unstable eigenvalue $\lambda = \lambda' - 1$:
\begin{equation}
    \left( \lambda' - \frac{\xi}{\alpha} \cdot \nabla + \bar{U} \cdot \nabla \right) \eta + {\rm BS}[\eta] \cdot \nabla \bar{\Omega} = 0 \, .
\end{equation}

First, we demonstrate that $\eta$ is compactly supported. When $|\xi| \geq 2$ is beyond the support of $\bar{U} \in C^\infty_0(B_2)$, we have
\begin{equation}
    \frac{r}{\alpha} \p_r \eta = \lambda' \eta \, , \quad r \geq 2 \, ,
\end{equation}
whose only decaying solutions are identically zero in $\R^2 \setminus B_2$.

Second, we prove that $\eta \in H^k$ provided that $\Re \lambda' > k/\alpha$. We consider ${\rm BS}[\eta] \cdot \nabla \bar{\Omega}$ to be a `forcing term' in the resolvent problem, and we rewrite the above equation via the Laplace transform characterization of the resolvent: $R(\lambda',\L_\ss) = \int_0^{+\infty} e^{-\lambda' t} e^{t \L_\ss} \, dt$. Let $X_t$ be the flow map associated to the autonomous vector field $V(\xi) := \xi/\alpha - \bar{U}(\xi)$. Then
\begin{equation}
    \eta = - \int_0^{+\infty} e^{-\lambda' t} ({\rm BS}[\eta] \cdot \nabla \bar{\Omega}) \circ X_t \, dt \, .
\end{equation}
To estimate $\eta$, we recall basic properties of $X_t$, which is explicitly
\begin{equation}
    X_t(r_0 e^{i \theta_0}) = r(t,r_0) e^{i\theta(t,\theta_0,r_0)} \, ,
\end{equation}
\begin{equation}
    r(t,r_0) = e^{t/\alpha} r_0 \, , \quad \theta(t,\theta_0,r_0) = \theta_0 - \int_0^t \bar{U}_\theta(r(s,r_0)) \, ds \, .
\end{equation}
By the general formulae
\begin{equation}
    (\nabla X_t e_r)(r_0 e^{i \theta_0}) = \p_{r_0} r \; (e_r \circ X_t) + r \p_{r_0} \theta \;  (e_\theta \circ X_t) \, ,
\end{equation}
\begin{equation}
     (\nabla X_t e_\theta)(r_0 e^{i \theta_0}) = \frac{1}{r_0} \p_{\theta_0} r\;   (e_r \circ X_t) + \frac{r}{r_0} \p_{\theta_0} \theta \;  (e_\theta \circ X_t) \, ,
\end{equation}
it is not difficult to demonstrate that $|\nabla X_t| \les e^{t/\alpha}$. A different way of saying this is that solutions of the vector-valued ODE $\dot{\vec{y}} = (\nabla V)(X_t) \vec{y}$ satisfy $|\vec{y}(t)| \les e^{t/\alpha} |\vec{y}(0)|$. By taking spatial derivatives of the ODE $\dot X_t = V(X_t)$, this is enough to bootstrap the estimate on $\nabla X_t$ to $|\nabla^k X_t| \les_k e^{k t/\alpha}$ for all $k \geq 0$. In particular, we have
\begin{equation}
    \| \eta \|_{H^k} \les \int_0^{+\infty} e^{(k/\alpha - \lambda')t} \, dt \; \| {\rm BS}[\eta] \cdot \nabla \bar{\Omega} \|_{H^k} \les \| \eta \|_{H^{k-1}} \, ,
\end{equation}
where the implied constant depends on various quantities. Hence, one can bootstrap $\eta \in L^2$ to $\eta \in H^k$ provided that $\Re \lambda > k/\alpha - 1$. Finally, since $\eta$ is compactly supported and mean zero, we have $|\BS[\eta]| \les |\xi|^{-2}$ for $|\xi| \geq 2$. In particular, $\BS[\eta] \in L^2$. 
\end{proof}

\begin{remark}
It is natural that the unstable eigenfunctions should have smoothness related to $\Re \lambda$ since solutions to the ODE $\lambda f = r \p_r f$, $\Re \lambda > 0$, are $f = \text{const}. \times r^{\lambda}$. In Proposition~\ref{pro:spectral}, once it is known that the eigenfunctions are $C^\beta$, it is possible to bootstrap them in the $C^{k,\beta}$ scale, but we do not require this here.
\end{remark}

Recall the similarity variables $\xi = x/t^{1/\alpha}$ and $\tau = \log t$ defined above~\eqref{eq:omegadef}. Given a similarity profile $\bar{\Omega}$ as in Proposition~\ref{pro:spectral}, we define the self-similar solution $\bar{\omega}$ according to~\eqref{eq:baromegadef}. Its velocity field is $\bar{u}(x,t) = t^{-1+1/\alpha} \bar{U}(\xi)$. More generally, given a vorticity $\omega$, we extract its similarity profile $\Omega$ according to~\eqref{eq:omegadef}. Its velocity field is $u(x,t) = t^{-1+1/\alpha} U(\xi,\tau)$. We make the convention that, given a lowercase variable representing vorticity or velocity, its similarity profile is represented by the corresponding uppercase variable, and vice versa.

We now state basic properties of the force, which is induced entirely by the time derivative and fractional Laplacian of $\bar{\omega}$.

\begin{lemma}[Finite energy forcing]
    \label{lem:forcinglemma}
Define $h := \p_t \bar{\omega} + \Lambda^\beta \bar{\omega}$ and $f := \Delta^{-1} \nabla^\perp h = \p_t \bar{u} + \Lambda^\beta \bar{u}$. Then the background velocity $\bar{u}$ solves the fractional Navier-Stokes equations with velocity forcing $f$, which belongs to $L^1_t L^2_x(\R^2 \times (0,T))$ for all $T > 0$. Additionally, $f \in L^\infty_t H^k_x(\R^2 \times (\varepsilon,T))$ for all $k \geq 0$ and $0 < \varepsilon < T < +\infty$.
\end{lemma}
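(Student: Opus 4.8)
The plan is to exploit the explicit self-similar form of $\bar\omega$ and $\bar u$ and reduce everything to bookkeeping of scaling exponents.

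First I would record the two identities in the statement. Since $\bar U=V(r)e_\theta$ is a steady circular flow, each time-slice $\bar u(\cdot,t)=t^{-1+1/\alpha}\bar U(\cdot\,t^{-1/\alpha})$ is again a circular flow, so $\bar u\cdot\nabla\bar u$ is a gradient; equivalently $\curl(\bar u\cdot\nabla\bar u)=\bar u\cdot\nabla\bar\omega=0$ because $\bar\omega(\cdot,t)$ is radial. Hence $\partial_t\bar u+\bar u\cdot\nabla\bar u+\Lambda^\beta\bar u-f$ is curl-free and decaying, so it equals $\nabla\bar p$ for a suitable pressure, which is exactly the assertion that $\bar u$ solves \eqref{eq:NS} with force $f$. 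The identity $f=\Delta^{-1}\nabla^\perp h=\partial_t\bar u+\Lambda^\beta\bar u$ then follows by commuting the Fourier multiplier $\Delta^{-1}\nabla^\perp$ past $\partial_t$ and past $\Lambda^\beta$ and invoking the Biot–Savart law $\bar u=\Delta^{-1}\nabla^\perp\bar\omega$; this is legitimate because $\bar\omega(\cdot,t)$ and $\bar u(\cdot,t)$ are smooth and compactly supported (in $B_{2t^{1/\alpha}}$) for each fixed $t>0$, and $h(\cdot,t)$ is smooth, decaying, and mean zero.

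Next I would write $f$ explicitly in similarity variables $\xi=xt^{-1/\alpha}$. Differentiating $\bar u$ gives $\partial_t\bar u(x,t)=t^{-2+1/\alpha}F_1(\xi)$ with $F_1:=(\tfrac1\alpha-1)\bar U-\tfrac1\alpha\,\xi\cdot\nabla\bar U\in C^\infty_0(B_2)$, while the $\beta$-homogeneity of $\Lambda^\beta$ (checked on the Fourier side) gives $\Lambda^\beta\bar u(x,t)=t^{-1+(1-\beta)/\alpha}(\Lambda^\beta\bar U)(\xi)$. Thus
\[
f(x,t)=t^{-2+1/\alpha}F_1(\xi)+t^{-1+(1-\beta)/\alpha}(\Lambda^\beta\bar U)(\xi).
\]
The only non-elementary ingredient is the profile $\Lambda^\beta\bar U$: since $\bar U\in C^\infty_0$, its Fourier transform is Schwartz, so $|k|^\beta\widehat{\bar U}(k)$ decays rapidly and $\Lambda^\beta\bar U\in H^k(\R^2)$ for every $k$ (in particular $\Lambda^\beta\bar U\in L^2$); the integrable $|k|^\beta$-singularity at the origin accounts only for the polynomial tail $|\Lambda^\beta\bar U(\xi)|\lesssim\langle\xi\rangle^{-2-\beta}$, which is irrelevant here.

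The estimates are then pure change of variables: $\|g(\cdot\,t^{-1/\alpha})\|_{L^2_x}=t^{1/\alpha}\|g\|_{L^2}$ and, more generally, $\|g(\cdot\,t^{-1/\alpha})\|_{H^k_x}\lesssim_k(t^{1/\alpha}+t^{(1-k)/\alpha})\|g\|_{H^k}$. Applying this to $F_1$ and to $\Lambda^\beta\bar U$ yields $\|f(\cdot,t)\|_{L^2_x}\lesssim t^{-2+2/\alpha}+t^{-1+(2-\beta)/\alpha}$, and both exponents are strictly larger than $-1$ precisely because $\alpha<2$ (first term) and $\beta<2$ (second term); hence $t\mapsto\|f(\cdot,t)\|_{L^2_x}$ is integrable near $t=0$ and trivially bounded on $[\varepsilon,T]$, giving $f\in L^1_tL^2_x(\R^2\times(0,T))$. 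The identical computation with $H^k_x$ in place of $L^2_x$ shows $\|f(\cdot,t)\|_{H^k_x}$ is bounded on $[\varepsilon,T]$, i.e. $f\in L^\infty_tH^k_x(\R^2\times(\varepsilon,T))$. There is no genuine obstacle: the only points requiring care are the commutation $\Delta^{-1}\nabla^\perp\circ\Lambda^\beta=\Lambda^\beta\circ\Delta^{-1}\nabla^\perp$ on the compactly supported profiles, and the standard fact that the fractional Laplacian of a Schwartz function lies in every $H^k$; it is worth noting only that the borderline time-integrability thresholds are exactly $\alpha=2$ and $\beta=2$, both excluded by hypothesis.
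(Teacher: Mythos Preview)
Your proof is correct and follows exactly the approach the paper intends: the paper's own ``proof'' consists of the single sentence ``This is a simple consequence of scaling,'' and you have correctly spelled out that scaling computation, including the verification that the two time exponents $-2+2/\alpha$ and $-1+(2-\beta)/\alpha$ exceed $-1$ precisely under the standing hypotheses $\alpha<2$ and $\beta<2$.
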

This is a simple consequence of scaling.

\subsection{Strategy}
\label{sec:strategy}

We revisit the ansatz~\eqref{eq:basicansatz}:
\begin{equation}
	\Omega = \bar{\Omega} + \Omega^{\rm lin} + \Omega^{\rm per}
\end{equation}
where
\begin{equation}\label{e:Omega_lin}
\Omega^{\rm lin} (\xi, \tau) :=  \Re (e^{\lambda \tau} \eta) \, ,
\end{equation}
$\lambda$ is a maximally unstable eigenvalue of $\L_\ss$, and $\eta$ is an associated non-trivial eigenfunction. Then $\Omega^{\rm lin}$ is a solution to the linearized evolution equation
\begin{equation}\label{e:evolution_of_Omega_lin}
\partial_\tau \Omega^{\rm lin} - \L_{\rm ss} \Omega^{\rm lin} = 0 \, ,
\end{equation}
 and our goal is to solve for the higher-order correction $\Omega^{\rm per}$. 
The PDE satisfied by $\Omega^{\rm per}$ is
\begin{equation}
    \label{eq:pdeforomegaper}
    \begin{aligned}
    &\p_\tau \Omega^{\rm per} - \L_\ss \Omega^{\rm per} + (U^{\rm lin} + U^{\rm per}) \cdot \nabla \Omega^{\rm per} + U^{\rm per} \cdot \nabla \Omega^{\rm lin} + e^{\tau \gamma} \Lambda^{\beta} \Omega^{\rm per} \\
    &\quad = - U^{\rm lin} \cdot \nabla \Omega^{\rm lin} - e^{\tau \gamma} \Lambda^{\beta} \Omega^{\rm lin} \, ,
    \end{aligned}
\end{equation}
where the right-hand side is considered a source term.
The fractional Laplacian $e^{\tau \gamma} \Lambda^\beta$ will be treated perturbatively. Hence, our approach is essentially quasilinear in nature, and it will be necessary to incorporate the transport $(U^{\rm lin} + U^{\rm per}) \cdot \nabla$ into the operator. Define
\begin{equation}
	- \L[W] = - \L_{\rm ss} + (U^{\rm lin} + W) \cdot \nabla + e^{\tau \gamma} \Lambda^\beta \, ,
\end{equation}
the `main operator' in the PDE, and the associated (formal) solution operator $S[W] = S[W](\tau,s)$. We seek solutions satisfying Duhamel's formula:
\begin{equation}\label{eqn:omega-per}
	\Omega^{\rm per}(\cdot,\tau) = - \int_{-\infty}^\tau S[U^{\rm per}](\tau,s) [( U^{\rm lin}+U^{\rm per}) \cdot \nabla \Omega^{\rm lin} +  e^{s \gamma} \Lambda^{\beta} \Omega^{\rm lin}](\cdot,s) \, ds \, .
\end{equation}

It will be necessary to develop the (strong) solution theory for $\Omega^{\rm per}$ and the solution operator $S$ in high regularity spaces, whose corresponding velocity fields are Lipschitz continuous, e.g., $\Omega \in H^s$, $s > d/2$. It will be more convenient to estimate $S$ in a function space $X \subset W^{1,4}$ which will be defined in Section~\ref{sec:linear}.

There are two main difficulties, which are already present in the Euler equations and have been encountered in previous work, e.g.,~\cite{bardosguostrauss}; the difficulties will only be exacerbated by the dissipation $e^{\tau \gamma} \Lambda^\beta$.

First, there is the apparent derivative loss in adding the transport operator $(U^{\rm lin} + U^{\rm per}) \cdot \nabla$ onto $\L_\ss$. On the one hand, it is necessary to use the spectral information on $\L_\ss$ through the semigroup estimate in $L^2$. On the other hand, it is too na{\"i}ve to write Duhamel's formula, containing expressions like $\int_0^\tau e^{(\tau - s) \L_\ss} (U^{\rm lin} \cdot \nabla \Omega^{\rm per}) \, ds$, and hope to close an estimate at the $L^2$ level. Moreover, in our setting, adding the dissipation operator $e^{\tau \gamma} \Lambda^\beta$ further exhibits a loss of $\beta$ derivatives. The key observation for handling the loss is that the compact term $U^{\rm per} \cdot \nabla \bar{\Omega}$, responsible for the creation of the unstable eigenvalues, gains regularity. Hence, the strategy will be to lose derivatives while applying the semigroup estimate in Duhamel's formula and regain them in a bootstrapping procedure based on energy estimates.\footnote{A different approach is to work with the Lagrangian formulation, in which there is no derivative loss, see the construction of unstable manifolds in~\cite{LinZengCPAM2013,LinZengCorrigendum2014}.}

The second difficulty concerns the operator $\bar{U} \cdot \nabla$ in high regularity spaces. One way to see the difficulty is to pass a derivative into the equation: The PDE for $\p_i \Omega^{\rm per}$ contains the term $\p_i \bar{U} \cdot \nabla \Omega^{\rm per}$, which in principle can change the growth rate. In~\cite{bardosguostrauss}, it was observed that it is enough assume that $\Re \lambda$ is greater than the maximal fluid Lyapunov exponent. 
Our approach will be Eulerian rather than Lagrangian. It is an energy method specific to shear flows and vortices (in short, the idea is to apply $\p_\theta$ first), and crucially, it is well behaved even with the term $e^{\tau \gamma} \Lambda^\beta$. In fact, when $\beta > 1$, the term $e^{\tau \gamma} \Lambda^\beta$ will help us in the bootstrapping procedure: Once we have propagated one derivative, the other $\beta-1$ derivatives needed to close the argument are regained by smoothing. Heuristically, a gain of $\beta$ derivatives costs a factor $e^{\tau \gamma}$, which is why the dissipation term cannot be used to control everything. This is seen in the tracking of the dissipation norm $D$ below.

\section{Linear theory}\label{sec:linear}


In the following,  we will assume that
$a_0 = 8/\alpha$ and $\delta\leq \delta_0 := \min\{ \gamma/4, 1/8, a_0/2\}$. We also fix $\bar \Omega$, $\lambda=a+ib$, $\eta$ and $m$ as in Proposition~\ref{pro:spectral}.
 We denote by $C$ constants whose values depend only on
 \begin{equation}
 \data = (\alpha,\beta, \bar \Omega, a, b, \eta, m, \delta) \, .    
 \end{equation}
 The dependence on $\bar \Omega$ and $ \eta$ can be made more precise through their regularity and size of the support, but this is not needed for the rest of our arguments, and we do not pursue it here. The value of $C$ may change from line to line. When we need to recall a specific constant of this type, we denote it with $C_1, C_2$, ...

This section is devoted to estimates on the solution operator $S[W]$ to the PDE
\begin{equation}\label{eqn:lin-pde}
	\p_\tau \Omega + (-1-\xi \cdot \nabla/\alpha) \Omega + \bar{U} \cdot \nabla \Omega + U \cdot \nabla \bar{\Omega} + (U^{\rm lin} + W) \cdot \nabla \Omega + e^{\tau \gamma} \Lambda^\beta \Omega = 0
\end{equation}
with initial data $\Omega(\cdot,\tau_0) = \Omega_0$ and divergence-free $W$ satisfying the property~\eqref{eq:Wrequirement} below. We assume that $\Omega_0$ and $W$ are $m$-fold rotationally symmetric, so that the semigroup estimate in Proposition~\ref{pro:spectral} can be applied. 
The remaining terms in the PDE~\eqref{eq:pdeforomegaper} will be incorporated in Section~\ref{sec:nonlin} by Duhamel's formula. 

Let us fix a parameter $Q \in [1,3/2)$. In view of the precise structure of the terms in our equation, it turns out that a natural space to perform such estimates is given by the norm
\begin{equation}
	\label{eq:Xnorm}
	\| \Omega \|_X = \| \Omega \|_{L^Q \cap L^2} +\| \p_\theta \Omega \|_{L^Q \cap L^4}  + \| \nabla \Omega \|_{L^2 \cap L^4} \, , 
\end{equation} 
which is equivalent to $\| \Omega \|_{L^Q} + \| \partial_\theta \Omega \|_{L^Q \cap L^4} + \| \nabla \Omega \|_{L^2 \cap L^4}$, but the description in~\eqref{eq:Xnorm} is perhaps more indicative of the estimates we perform.
We also keep track of the quantity
\begin{equation}
	\| \Omega \|_{D(\tau,\tau_0)}^2 = \int_{\tau_0}^{\tau} e^{s \gamma} \| \Lambda^{1+\frac{\beta}{2}} \Omega \|_{L^2}^2 \, ds
\end{equation}
related to the dissipation.

The space $X$ has some useful properties listed below. Let us consider $W \in L^2$ such that $\curl W \in X$.
Since $ 
K_2 =  1_{B_1}
K_2 +  1_{B_1^c}
K_2 \in L^{p} + L^{q}$ for every $p<2,q>2$, we have by Young's convolution inequality that
\begin{equation}
\label{eqn:U-infty}
 \|  W \|_{L^\infty} =  \| K_2 \ast \curl W \|_{L^\infty} \leq C ( \| \curl W \|_{L^Q}+  \|  \curl W \|_{L^4}) \leq  C \| \curl W \|_{X} \, ,
\end{equation}
and similarly,
\begin{equation}
\label{eqn:p-theta-W-infty}
\| \p_\theta W \|_{L^\infty} =  \| K_2 \ast \p_\theta \curl W \|_{L^\infty} \leq C ( \| \p_\theta \curl W \|_{L^Q} +  \| \p_\theta \curl W \|_{L^4}) \leq C \| \curl W \|_X \, .
\end{equation}


The main result of this section is

\begin{proposition}[Estimates for $S$]
\label{pro:transportestimate}
Let $W \in L^\infty((-\infty, 0]; L^2(\R^2))$ be such that
\begin{equation}
	\label{eq:Wrequirement}
	\sup_{\tau \in (-\infty,0)} e^{-a \tau} \norm{\curl W}_X \leq 1.
\end{equation}
Then for every $\delta \leq \delta_0$, there exist constants $T_{\rm max}:= T_{\rm max}(\data) \leq 0$ and $\bar{C} := \bar{C}(\data) > 1$  such that for every $\tau_0 \in (-\infty,T_{\rm max}]$ and $\tau \in [\tau_0,T_{\rm max}]$, we have
\begin{equation}
	\label{eq:transportestimate}
	\|  S[W](\tau,\tau_0) \Omega_0 \|_X  + \|  S[W](\cdot,\tau_0) \Omega_0 \|_{D(\tau_0,\tau)} \leq \bar{C} e^{(a + \delta) (\tau-\tau_0)} \| \Omega_0 \|_X.
\end{equation}
\end{proposition}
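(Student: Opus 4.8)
The plan is to prove \eqref{eq:transportestimate} by a continuity/bootstrap argument at fixed $\tau_0$, combining the spectral semigroup estimate from Proposition~\ref{pro:spectral} (applied after throwing away derivatives) with energy estimates that regain those derivatives, where the smallness coming from choosing $\tau_0 \leq T_{\max}$ very negative is what closes the loop. First I would set up the Duhamel representation: writing $\Omega(\tau) = S[W](\tau,\tau_0)\Omega_0$, I would separate the `bad' transport/dissipation terms from $\L_\ss$, namely rewrite \eqref{eqn:lin-pde} as $\p_\tau\Omega - \L_\ss\Omega = -(U^{\rm lin}+W)\cdot\nabla\Omega - e^{\tau\gamma}\Lambda^\beta\Omega$, and express $\Omega$ via the $e^{\tau\L_\ss}$ semigroup. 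At the $L^2$ level, using \eqref{eqn:semigr} with the chosen $\delta$, the contribution of $(U^{\rm lin}+W)\cdot\nabla\Omega$ is bounded using $\|U^{\rm lin}\|_{W^{1,\infty}}\lesssim e^{a\tau}$ (from Proposition~\ref{pro:spectral}, part 3, and the definition \eqref{e:Omega_lin}) and $\|W\|_{L^\infty}\lesssim e^{a\tau}\|\curl W\|_X\lesssim e^{a\tau}$ from \eqref{eqn:U-infty} together with the hypothesis \eqref{eq:Wrequirement}; crucially these transport coefficients are \emph{small} for $\tau$ negative, which will produce the required extra $e^{\delta(\tau-\tau_0)}$ room. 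The dissipative term $e^{s\gamma}\Lambda^\beta\Omega$ loses $\beta$ derivatives in Duhamel but comes with the prefactor $e^{s\gamma}$, which is tiny for $s$ negative; this is exactly the term whose control is recorded by the $D(\tau_0,\tau)$ norm, and one absorbs it by Cauchy--Schwarz in $s$ against $\int e^{s\gamma}\,ds$.

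The core of the argument is the regularity bootstrap, since the $L^2$ semigroup estimate alone cannot see the $\|\p_\theta\Omega\|_{L^Q\cap L^4}$ and $\|\nabla\Omega\|_{L^2\cap L^4}$ pieces of the $X$ norm. Here I would follow the shear-flow energy method advertised in Section~\ref{sec:strategy}: apply $\p_\theta$ to \eqref{eqn:lin-pde} first. The point is that $[\p_\theta, \bar U\cdot\nabla] = (\p_\theta \bar U)\cdot\nabla$, but since $\bar U = V(r)e_\theta$ is a vortex, $\p_\theta\bar U$ is again a (rotation of the) radial profile, so this commutator does not genuinely cost a derivative in a way that spoils the growth rate — $\p_\theta$ commutes with the scaling operator $-1-\xi\cdot\nabla/\alpha$ and with $\Lambda^\beta$, and the dangerous stretching terms are tamed because differentiating in $\theta$ does not hit $V(r)$. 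Then I would run an $L^Q$ and an $L^4$ energy estimate for $\p_\theta\Omega$: transport terms are estimated by $\|\nabla(U^{\rm lin}+W)\|_{L^\infty}$ (again $\lesssim e^{a\tau}$, small), the Biot--Savart term $\p_\theta(U\cdot\nabla\bar\Omega)$ gains regularity because $\bar\Omega$ is smooth and $\mathrm{BS}$ is smoothing of order one, and the dissipation contributes a good sign (a nonpositive term in the $L^p$ energy identity for $p=2$, and is otherwise controlled via $e^{\tau\gamma}$ smallness for the $L^4$ piece when $\beta>1$, or simply discarded using its sign). For the full gradient $\nabla\Omega$ in $L^2\cap L^4$ one combines the $\p_\theta\Omega$ control with an estimate for the radial derivative; the radial direction is where the fluid Lyapunov-exponent issue would normally bite, but the gain of regularity in the compact term $U\cdot\nabla\bar\Omega$ plus the $e^{\tau\gamma}\Lambda^\beta$ smoothing (recorded in $D$) closes it, possibly after propagating just one derivative and regaining the remaining $\beta-1$ by the dissipative smoothing as indicated in the strategy section.

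Concretely I would introduce the quantity $N(\tau) := \sup_{s\in[\tau_0,\tau]} e^{-(a+\delta)(s-\tau_0)}\big(\|\Omega(s)\|_X + \|\Omega(\cdot,\tau_0)\|_{D(\tau_0,s)}\big)$ and show, via the above Duhamel plus energy estimates, an inequality of the form $N(\tau) \leq \bar C_0\|\Omega_0\|_X + C e^{a\tau_0}\,C(T_{\max})\,N(\tau)$ for $\tau\in[\tau_0,T_{\max}]$, where the second term collects all transport and dissipation contributions and carries either an explicit $e^{a\tau_0}$ (from $\tau_0\leq s\leq T_{\max}\leq 0$ and the smallness of $U^{\rm lin}, W$) or an $e^{\gamma T_{\max}}$ factor. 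Choosing $T_{\max}=T_{\max}(\data)$ sufficiently negative makes the coefficient of $N(\tau)$ on the right at most $1/2$, giving $N(\tau)\leq 2\bar C_0\|\Omega_0\|_X=:\bar C\|\Omega_0\|_X$, which is exactly \eqref{eq:transportestimate}. A routine approximation/a priori argument (first establish the estimate for smooth data and a regularized equation, then pass to the limit) legitimizes treating $\Omega$ as a strong solution throughout and in fact yields existence of the solution operator $S[W]$ with the stated bound. The main obstacle I expect is the gradient estimate in the radial direction — making precise that the $\p_\theta$-first energy method together with the order-one smoothing of the Biot--Savart compact term and the $e^{\tau\gamma}\Lambda^\beta$ dissipation genuinely prevents the growth rate from degrading when derivatives hit $\bar U$; getting the bookkeeping of $L^Q$ versus $L^2$ versus $L^4$ norms consistent (so that Biot--Savart maps $X$-controlled vorticities to $W^{1,\infty}$ velocities via \eqref{eqn:U-infty}--\eqref{eqn:p-theta-W-infty}) and verifying that the $\beta>1$ versus $\beta\leq1$ cases are both absorbed by the $D$-norm is the delicate part.
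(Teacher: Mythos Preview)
Your overall architecture matches the paper's proof closely: bootstrap/continuity with a quantity like $N(\tau)$, Duhamel against $e^{\tau\L_\ss}$ to get the baseline $L^2$ estimate while losing derivatives, then energy estimates (first on $\p_\theta\Omega$, then on $\nabla\Omega$) to regain them, with closure coming from the smallness of $e^{a\tau}$ and $e^{\gamma\tau}$ on $(-\infty,T_{\max}]$. The paper also inserts a direct $L^Q$ energy estimate on $\Omega$ itself (using the C\'ordoba--C\'ordoba sign for the dissipation) before the $\p_\theta$ step; you should add this, since $\|\Omega\|_{L^Q}$ is part of $X$ and is not controlled by the $L^2$ Duhamel argument.

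Two points deserve correction. First, a minor one: for the vortex $\bar U = V(r)e_\theta$ one has $\bar U\cdot\nabla = \tfrac{V(r)}{r}\p_\theta$, so in fact $[\p_\theta,\bar U\cdot\nabla]=0$ exactly; there is no commutator term to worry about at that stage, which is why the $\p_\theta$ estimate goes through cleanly in $L^Q\cap L^4$.

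Second, and this is the genuine gap: your mechanism for the full-gradient estimate is not the one that works. When you commute $\p_i$ through, the dangerous term is $\p_i\bar U\cdot\nabla\Omega$, which carries no smallness (it is governed by $\|\nabla\bar U\|_{L^\infty}$, a fixed constant depending on the vortex). Neither ``gain of regularity in the compact term $U\cdot\nabla\bar\Omega$'' nor ``$e^{\tau\gamma}\Lambda^\beta$ smoothing'' addresses this: the first concerns a different term, and the second would only buy you something at the cost of a factor $e^{-\tau\gamma}$, which does not help absorb an $O(1)$ coefficient. Without further structure, Gr\"onwall would produce growth $e^{\|\nabla\bar U\|_{L^\infty}\tau}$, potentially exceeding $e^{(a+\delta)\tau}$ and breaking the bootstrap. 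The paper's actual device is an algebraic cancellation specific to vortices: writing $\bar U(\xi)=\zeta(|\xi|)\xi^\perp$, one computes
\[
\p_i\bar U\cdot\nabla\Omega \;=\; \zeta'(|\xi|)\tfrac{\xi_i}{|\xi|}\,\p_\theta\Omega \;+\; \zeta(|\xi|)\,(\nabla^\perp\Omega)_i,
\]
and upon multiplying by $\p_i\Omega\,|\nabla\Omega|^{p-2}$ and summing over $i$, the second piece vanishes identically since $\nabla^\perp\Omega\cdot\nabla\Omega=0$. What survives is controlled by $\|\p_\theta\Omega\|_{L^p}$, which was already bounded in the previous step. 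This is precisely the ``energy method specific to shear flows and vortices'' alluded to in the strategy, and it is what replaces the Lyapunov-exponent hypothesis of Bardos--Guo--Strauss. The dissipation term in the $\nabla\Omega$ energy identity is simply kept with its favorable sign (for $p=2$ it yields the $D$-norm bound; for $p=4$ its nonnegativity is checked via the Caffarelli--Silvestre extension) rather than used to regain derivatives.
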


\begin{proof}[Proof of Proposition~\ref{pro:transportestimate}]
The constants $T_{\rm max} \leq 0$  and $\bar{C} > 1$ 
will be chosen later. 

For $\tau \in [\tau_0,T_{\rm max}]$, we denote $\Omega(\cdot,\tau) = S[W](\tau,\tau_0) \Omega_0$ the solution of \eqref{eqn:lin-pde}. The proof of Proposition \ref{pro:transportestimate} will be established through the following claim.

\emph{ Let $\tau_1 \in [\tau_0, T_{\rm max}]$ satisfying the following bootstrap assumption:
\begin{equation}\label{eqn:bootstrap}
\mbox{    For all $\tau \in [\tau_0,\tau_1]$, the estimate~\eqref{eq:transportestimate} holds.}
\end{equation}
Then for all $\tau \in [\tau_0,\tau_1]$ we have
\begin{equation}
\label{eqn:finalbound}
\| \Omega(\cdot,\tau) \|_{X} + \| \Omega \|_{D(\tau_0,\tau)} \leq  C_{1}(1
+ \bar C ^{1/2}e^{\zeta T_{\rm max}}) \bar C ^{1/2} e^{(a+\delta)(\tau-\tau_0)} \| \Omega_0 \|_X \, ,
\end{equation}
where $ C_{1}:= C_1(\data)>1$ and $\zeta := \zeta(\data) > 0$.}


Once this claim is established, we easily conclude the proof of Proposition~\ref{pro:transportestimate} by the following argument. 
We fix  $ \bar C =  (4 C_{1}
)^{2}$ and
$T_{\rm max}$ in terms of $\data$, but otherwise independent of $\tau_0$, such that $ \bar C^{1/2} e^{\zeta T_{\rm max}}  \leq 1
$.
Next, we consider $\bar \tau_1$ to be the largest value of $\tau_1 \in (\tau_0, T_{\rm max})$ such that~\eqref{eq:transportestimate}  holds. If $\bar \tau_1 =T_{\rm max}$, our main claim is proved. The case $\bar \tau_1 <T_{\rm max}$ is ruled out by the continuity of $\tau \to\| \Omega(\cdot,\tau) \|_{X}$
and by \eqref{eqn:finalbound}, which implies with our choice of $T_{\rm max}$ that the bound \eqref{eq:transportestimate} is not saturated in $\bar \tau_1$, namely,
\begin{equation}
    \| \Omega(\cdot,\bar \tau_1) \|_{X} + \| \Omega \|_{D(\tau_0,\bar \tau_1)} \leq  \frac{\bar C}{2} e^{(a+\delta)(\bar \tau_1-\tau_0)} \| \Omega_0 \|_X \, .
\end{equation}
Hence, the proof of Proposition~\ref{pro:transportestimate} is complete
provided that \eqref{eqn:finalbound} is established.

\bigskip

{\bf Step 1. [Baseline estimate: Improved $L^2$ bound]} 
{\it
Suppose that the bootstrap assumption \eqref{eqn:bootstrap} holds. 
Then, for all $\tau \in [\tau_0,\tau_1]$,
\begin{equation}\label{eqn:baseline-est}
	\| \Omega(\cdot,\tau) \|_{L^2} \leq C (1+  \bar C
	e^{2\zeta T_{\rm max}} e^{(a+\delta)(\tau-\tau_0)}) \| \Omega_0 \|_X.
\end{equation}
}

Indeed, by Duhamel's formula:
\begin{equation}\label{eqn:duham}
	\Omega (\cdot,\tau) = e^{(\tau-\tau_0)L_{\rm ss}} \Omega_0 - \int_{\tau_0}^\tau e^{(\tau-s)L_{\rm ss}} [(W + U^{\rm lin}) \cdot \nabla \Omega
	+
	e^{s \gamma} 
	\Lambda^\beta \Omega](\cdot,s)  \, ds.
\end{equation}
According to the semigroup estimate \eqref{eqn:semigr}, we have\footnote{ Crucially, this is where we use assumptions on the semigroup $e^{\tau\L_\ss}$ generated by the linearized operator. While the above Duhamel formula `loses derivatives' in the sense that we are estimating in $L^2$ according to `higher' quantities, e.g., $\| \nabla \Omega \|_{L^2}$, it is acceptable to lose derivatives at this level, though it will not be acceptable at the `top tier' ($\| \nabla \Omega \|_{L^4}$). This is common in quasi-linear perturbation arguments and can already be seen in standard proofs of local-in-time existence for the Euler equations.}
\begin{equation}\label{eqn:duham-est}
\begin{split}
    	\|\Omega (\cdot,\tau)\|_{L^2} &\leq  \| e^{(\tau-\tau_0)L_{\rm ss}} \Omega_0\|_{L^2} + \int_{\tau_0}^\tau \|e^{(\tau-s)L_{\rm ss}} [(W + U^{\rm lin}) \cdot \nabla \Omega
	-
	e^{s \gamma} 
	\Lambda^\beta \Omega ] \|_{L^2}(\cdot,s) \, ds
	\\&
	\leq  C e^{(a + \delta)(\tau - \tau_0)} \| \Omega_0 \|_{L^2}
	+ \int_{\tau_0}^\tau e^{(a+\delta) (\tau - s)} [\| (W + U^{\rm lin}) \cdot \nabla \Omega (\cdot,s) \|_{L^2}(\cdot,s) \, ds 
\\&\qquad	+
	 \int_{\tau_0}^\tau e^{(a+\delta) (\tau - s)} e^{s \gamma} 
	\|\Lambda^\beta \Omega  \|_{L^2}(\cdot,s) ]\, ds \, .
\end{split}
\end{equation}

Next, we estimate the first integrand in the right-hand side of \eqref{eqn:duham-est}. 
By \eqref{eqn:U-infty}, the smoothness of $\eta$ in Proposition~\ref{pro:spectral}, the assumption on $W$ \eqref{eq:Wrequirement} and the bootstrap assumption \eqref{eqn:bootstrap}, we obtain that
\begin{align}
\| (W + U^{\rm lin}) \cdot \nabla \Omega(\cdot,s) \|_2 &\leq \| W \|_{\infty} \| \nabla \Omega \|_{2}+
  \| U^{\rm lin} \|_\infty \| \nabla \Omega \|_2
 \\&\leq  e^{a s} \| \Omega(\cdot,s) \|_X
 \les e^{2s \zeta}e^{(a + \delta) (s - \tau_0)}  \| \Omega_0 \|_X.
 \label{eqn:100}
\end{align}

Plugging this estimate in the first integral in \eqref{eqn:duham-est}, we can estimate such integral with $ C\bar C e^{2\zeta  T_{\rm max}}  e^{(a+\delta)(\tau-\tau_0)} \| \Omega_0 \|_X$.

Next, we consider the dissipation term. First, we interpolate between $L^2$ and $\dot H^{1+\frac{\beta}{2}}$ and by Young inequality we have
\begin{equation}
\label{eq:thingiusedtopass}
\begin{aligned}
	\Big( \int_{\tau_0}^\tau e^{s \gamma} \| \Lambda^\beta \Omega \|_{L^2}^2 \Big)^{1/2}\, ds &\leq \Big(\int_{\tau_0}^\tau e^{s \gamma} \| \Lambda^{1+\frac{\beta}{2}} \Omega \|_{L^2}^{2 \frac{2-\beta}{2+\beta}} \| \Omega \|_{L^2}^{\frac{4\beta}{2+\beta}} \, ds \Big)^{1/2}\\
	 &\leq C\Big(\int_{\tau_0}^\tau e^{s \gamma} \big(\| \Lambda^{1+\frac{\beta}{2}} \Omega \|_{L^2}^{2}+  \| \Omega \|_{L^2}^{2}\big) \, ds \Big)^{1/2}\\
	 &\leq
	C\| \Omega \|_{D(\tau_0,\tau)} + C\bar{C} \Big(\int_{\tau_0}^\tau e^{2s\gamma} e^{2(a+\delta)(s-\tau_0)} \| \Omega_0 \|_{X}^2\, ds \Big)^{1/2} \\
	&
	\leq  C\bar{C}  e^{(a+\delta)(\tau - \tau_0)} \| \Omega_0 \|_{X},
	\end{aligned}
\end{equation}
where we used the bootstrap assumption both on $\| \Omega \|_{D(\tau_0,\tau)}$ and on $\| \Omega \|_{L^2}$.
 Then,  using the Cauchy-Schwarz in time and~\eqref{eq:thingiusedtopass},
\begin{equation}\label{diss-baseline}
\begin{aligned}
\int_{\tau_0}^\tau e^{(\tau- s)(a+\delta)} e^{s \gamma/2} \| e^{s \gamma/2} \Lambda^{\beta} \Omega(\cdot,s) \|_{L^2}\, ds 
	&\leq C  e^{\tau \gamma/2} (\bar{C} + 1) e^{(a+\delta)(\tau-\tau_0)} \| \Omega_0 \|_X
.
	\end{aligned}
\end{equation}
This completes the proof of \eqref{eqn:baseline-est}.

We notice that from an interpolation inequality, \eqref{eqn:baseline-est} and the bootstrap assumption it follows also an improved $L^4$ bound of the following form:
\begin{equation}\label{eqn:omega-4-impr}
\begin{split}
\| \Omega  \|_{L^4} &\leq C \| \Omega  \|_{L^2}^{1/2} \| \nabla  \Omega  \|_{L^2}^{1/2} 
\\&\leq [C(1+ \bar C e^{2\zeta T_{\rm max}}) e^{(a+\delta)(\tau-\tau_0)} )]^{1/2} \bar C^{1/2}e^{\frac{a+\delta}{2}(\tau-\tau_0)} \| \Omega_0 \|_X
\\&
\leq C(1+ \bar C^{1/2} e^{\zeta T_{\rm max}}) \bar C^{1/2}e^{(a+\delta)(\tau-\tau_0)} \| \Omega_0 \|_X.
\end{split}
\end{equation}
Notice that this estimate is worse than \eqref{eqn:baseline-est} due to the presence of $ \bar C^{1/2}$; however, being its power strictly less than $1$, the estimate is still good enough.
%


\bigskip

{\bf Step 2. [Improved $L^Q$ bound]}
{\it Suppose that the bootstrap assumption \eqref{eqn:bootstrap} holds. 
Then  for all $\tau \in [\tau_0,\tau_1]$
\begin{equation}\label{eqn:ts-Q}
	\| \Omega(\cdot,\tau) \|_{L^Q} \leq C(C_{\delta_0}+ C \bar C e^{2\zeta T_{\rm max}} e^{(a+\delta)(\tau-\tau_0)} )\| \Omega_0 \|_X.
\end{equation}
}
We perform an $L^Q$ energy estimate on the equation for $\Omega$, namely we multiply the equation by $Q  |\Omega|^{Q-2} \Omega$ and integrate in space. We obtain
\begin{equation}\label{eqn:Q-est}
\frac{d}{dt} \int |\Omega|^Q \, d \xi \leq \Big(- Q+\frac 2 \alpha \Big) \int |\Omega|^Q \, d \xi + Q \int |U \cdot \nabla \bar{\Omega}| | \Omega|^{Q-1}\, d \xi  - Q\int  e^{\tau \gamma} \Lambda^\beta \Omega  |\Omega|^{Q-2} \Omega\, d \xi 
\end{equation}
The last term, related to the dissipation, is known to be negative via the C{\'o}rdoba-C{\'o}rdoba inequality (see, for instance,~\cite[Lemma 2.4]{CoCo}). The main term of interest is the `forcing'
$ U \cdot \nabla \bar{\Omega}$, which  we estimate thanks to \eqref{eqn:U-infty} and the fact that $\bar \Omega$ is compactly supported:
\begin{equation}
\| U \cdot \nabla \bar{\Omega}\|_{L^Q}\leq 	\| U\|_{L^\infty}\| \nabla \bar{\Omega}\|_{L^Q} \leq C \| \Omega\|_{X} \, .    
\end{equation}

We conclude, via a Gronwall estimate on the inequality \eqref{eqn:Q-est}, that \eqref{eqn:ts-Q} holds.
Here we take advantage of the fact that $a_0>\frac {2}\alpha$ to see that the term $ \big(- Q+\frac 2 \alpha \big) \int |\Omega|^Q \, d \xi $ can be neglected in this estimate, since it does not change the exponential in time behavior of the solution.



\bigskip

{\bf Step 3. [Improved $\p_\theta\Omega $ bound]}
{\it 
Suppose that the bootstrap assumption \eqref{eqn:bootstrap} holds. 
Then  for all $\tau \in [\tau_0,\tau_1]$ and for $p = Q,2,4$, we have
\begin{equation}\label{eqn:improvedpthetabound}
	\| \p_\theta \Omega(\cdot,\tau) \|_{L^p} \leq C(1+ \bar C^{1/2} e^{2\zeta T_{\rm max}})\bar C^{1/2} e^{(a+\delta)(\tau-\tau_0)}   \| \Omega_0 \|_X
.
\end{equation}
}
We first observe that
\begin{equation}
	\p_\theta \Lambda^\beta = \Lambda^\beta \p_\theta \, ,
\end{equation}
which follows from differentiating the rotation symmetry
$
	{\rm Rot}_\theta \Lambda^\beta = \Lambda^\beta {\rm Rot}_\theta
$ 
in the angle $\theta \in \R$ at $\theta = 0$.  Similarly $\partial_\theta \curl W =  \curl \partial_\theta W$ and notice that $\p_\theta$ also commutes through the Biot-Savart law.

Commuting $\p_\theta$ into the PDE, we have
\begin{equation}
	\p_\tau \p_\theta \Omega + (-1 - \frac{\xi}{\alpha} \cdot \nabla) \p_\theta \Omega + (\bar{U} + W + U^{\rm lin}) \cdot \nabla \p_\theta \Omega    + e^{\tau \gamma} \Lambda^\beta \p_\theta \Omega = F,
\end{equation}
where
\begin{equation}
	-F = \p_\theta U \cdot  \nabla \bar{\Omega} + (\p_\theta W + \p_\theta U^{\rm lin}) \cdot \nabla \Omega.
\end{equation}

As regards the first term in the force, we estimate it in $L^p$ via Calder{\'o}n-Zygmund as follows
$$\|\p_\theta U \cdot  \nabla \bar{\Omega} \|_{L^p} \leq \|\frac 1 {|x|} \p_\theta U  \|_{L^p} \| |x| |\nabla \bar{\Omega}| \|_{L^\infty} \leq C \|\nabla U  \|_{L^p} \leq C \| \Omega  \|_{L^p}
$$
Using the baseline estimate \eqref{eqn:baseline-est} if $p=Q$ or $p=2$ and the improved bound \eqref{eqn:ts-Q} if $p=4$, we get
\begin{equation*}
\begin{split}
\|\p_\theta U \cdot  \nabla \bar{\Omega} \|_{L^4} &\leq C \| \Omega  \|_{L^4} 
\leq C(1+ \bar C e^{\zeta T_{\rm max}}) \bar C^{1/2} e^{(a+\delta)(\tau-\tau_0)} \| \Omega_0 \|_X .
\end{split}
\end{equation*}

By \eqref{eqn:U-infty}, \eqref{eqn:p-theta-W-infty},  \eqref{eq:Wrequirement} and the bootstrap assumption we have, for $p=2,4$, 
  \begin{equation}
\begin{split}
 \| (\p_\theta W  + \p_\theta U^{\rm lin}) \nabla \Omega \|_{L^p} 
 &\leq C  (\| \p_\theta W \|_{L^\infty} + \|  \p_\theta U^{\rm lin} \|_{L^\infty} ) \| \nabla \Omega \|_{L^p} 
	\\&\leq  C \bar C e^{a\tau} e^{(a+\delta) (\tau - \tau_0)} \| \Omega_0 \|_X 
	.
\end{split}
\end{equation}
For $p=Q$ we modify the previous computation as follows
\begin{equation}
\begin{split}
	\| (\p_\theta W  + \p_\theta U^{\rm lin}) \nabla \Omega \|_{L^Q} &\leq C (\| \p_\theta W \|_{L^{\frac{2Q}{2-Q}}} + \|  \p_\theta U^{\rm lin} \|_{L^{\frac{2Q}{2-Q}}} ) \| \nabla \Omega \|_{L^2} 
	\\&\leq  C (\| \p_\theta \curl W \|_{L^{Q}} + \|  \p_\theta U^{\rm lin} \|_{L^{\frac{2Q}{2-Q}}} ) \| \nabla \Omega \|_{L^2} 
	\\&
	\leq C
	  \bar C  e^{a\tau}e^{(a+\delta) (\tau - \tau_0)} \| \Omega_0 \|_X 
.
\end{split}
\end{equation}

We multiply the equation by $p (\p_\theta \Omega)^{p-1}$ if $p=Q,2,4$ and integrate by parts. After rewriting the fractional dissipation term and using Holder 
 inequality to estimate the term which includes $F$, we get
\begin{equation}
 \frac{d}{d\tau} \int |\p_\theta \Omega|^p \, d\xi + \Big(\frac{2}{\alpha}-p) \int | \p_\theta \Omega|^p \, d\xi + c e^{\tau \gamma} \int |\Lambda^{\frac{\beta}{2}} |\p_\theta \Omega|^{p/2} |^2 \, d\xi \leq C \| F \|_{L^p} \|\p_\theta \Omega\|_{L^p}^{p-1}.
\end{equation}
We neglect the last term in the left-hand side, which has the right sign, divide by $\|\p_\theta \Omega\|_{L^p}^{p-1}$ and sum over $p=Q,2,4$.


\bigskip

{\bf Step 4. [Improved $\nabla \Omega$ bound]}
{\it
	Suppose that the bootstrap assumption \eqref{eqn:bootstrap} holds. 
Then  for all $\tau \in [\tau_0,\tau_1]$ and for $p=2,4$, we have
	\begin{equation}\label{ts:grad1}
	\| \nabla \Omega \|_{L^p} \leq C(1+ \bar C^{1/2} e^{2\zeta T_{\rm max}}) e^{(a+\delta)(\tau-\tau_0)}  \bar C^{1/2} \| \Omega_0 \|_X \, ,
\end{equation}
\begin{equation}\label{ts:grad2}
\|  \Omega \|_{D(\tau_0,\tau)}  \leq C(1+ \bar C^{1/2} e^{2\zeta T_{\rm max}}) e^{(a+\delta)(\tau-\tau_0)}  \bar C^{1/2} \| \Omega_0 \|_X \, .
\end{equation}
}
Let $i=1,2$ and consider the equation for $\p_i \Omega$: 
\begin{equation}
\begin{aligned}
	\label{eq:piomegaequation}
	&\p_\tau \p_i \Omega + (-1-1/\alpha - \xi \cdot \nabla/\alpha) \p_i \Omega + (\bar{U}  + U^{\rm lin} + W) \cdot \nabla \p_i \Omega \\
	&\quad + 
	 \p_i \bar{U} \cdot \nabla \Omega + \p_i( U^{\rm lin} + W) \cdot \nabla \Omega - e^{\tau \gamma} \Lambda^\beta \p_i \Omega = F_i,
	\end{aligned}
\end{equation}
where
\begin{equation}\label{eqn:for-grad}
	-F_i = \p_i U \cdot \nabla \bar{\Omega} + U \cdot \nabla \p_i \bar{\Omega} 
	.
\end{equation}

We multiply by $|\nabla \Omega |^{p-2}\p_i \Omega$, sum over $i=1,2$  and integrate by parts. The last term in the first line of \eqref{eq:piomegaequation} vanishes, after this computation, due to the transport structure. We have
\begin{equation}\label{eqn:full-grad}
\begin{aligned}
	&\frac{1}{p} \frac{d}{d\tau} \int |\nabla \Omega|^p \, d\xi + \Big(\frac{2}{\alpha p} -1 -\frac 1 \alpha \Big) \int |\nabla \Omega|^p \, d\xi -\int e^{\tau \gamma} \Lambda^{\beta} \p_i \Omega |\nabla \Omega|^{ p-2} \p_i \Omega \, d\xi  
	\\
	&\quad + \sum_{i=1}^2 \int 
	\p_i \bar{U} \cdot \nabla \Omega \p_i \Omega |\nabla \Omega|^{p-2} \, d\xi +  \p_i ( U^{\rm lin} + W) \cdot \nabla \Omega  \p_i \Omega  |\nabla \Omega|^{p-2} d\xi \\
	&\quad =   \sum_{i=1}^2  \int F_i \p_i \Omega  |\nabla \Omega|^{p-2} d\xi \, .
	\end{aligned}
\end{equation}
We remark that
\begin{equation}\label{eqn:cancellaz}
\p_i \bar{U} \cdot \nabla \Omega = \zeta' (|x|) \frac{x_i}{|x|} \partial_\theta \Omega + \zeta(|x|) \p_i x^\perp \cdot \nabla \Omega
= \zeta' (|x|) \frac{x_i}{|x|} \partial_\theta \Omega - \zeta(|x|)  \nabla^\perp \Omega
\end{equation}
and we exploit the cancellation of the last term in \eqref{eqn:cancellaz} when multiplied by $\nabla \Omega$. 
Therefore \eqref{eqn:full-grad} rewrites as
\begin{equation}\label{eqn:full-grad}
\begin{aligned}
	&\frac{1}{p} \frac{d}{d\tau} \int |\nabla \Omega|^p \, d\xi + \Big(\frac{2}{\alpha p} -1 -\frac 1 \alpha \Big) \int |\nabla \Omega|^p \, d\xi -\int e^{\tau \gamma} \Lambda^{\beta} \p_i \Omega |\nabla \Omega|^{ p-2} \p_i \Omega \, d\xi   \\
	&\quad =  - \sum_{i=1}^2 \int  \Big(
	\zeta' (|x|) \frac{x_i}{|x|} \partial_\theta \Omega +  \p_i ( U^{\rm lin} + W) \cdot \nabla \Omega +F_i \Big) \p_i \Omega  |\nabla \Omega|^{p-2}  d\xi 
	\\& \leq  C \|
	 \zeta' (|x|) \frac{x_i}{|x|} \partial_\theta \Omega +  \p_i ( U^{\rm lin} + W) \cdot \nabla \Omega +F_i\|_{L^p}^p + \frac 1 2 \int |\nabla \Omega|^p \, d\xi \, .
	\end{aligned}
\end{equation}

We estimate each term in the $L^p$ norm in the right-hand side. By  \eqref{eqn:improvedpthetabound} and since $\zeta'$ is bounded, we have
\begin{equation}
\|\zeta' (|x|) \frac{x_i}{|x|}
 \partial_\theta \Omega\|_{L^p}
\leq \|\zeta' (|x|)
\|_{L^\infty}
\| \partial_\theta \Omega\|_{L^p} \leq  C(1+ \bar C^{1/2} e^{2\zeta T_{\rm max}}) e^{(a+\delta)(\tau-\tau_0)}  \bar C^{1/2} \| \Omega_0 \|_X.
\end{equation}
By the explicit expression of $\Omega^{\rm lin}$, the regularity of $\eta$ in Proposition~\ref{pro:spectral}, 
and \eqref{eqn:U-infty}, we have  $\| \nabla U^{\rm lin}  \|_{L^\infty} \les e^{a \tau} $. Using the Gagliardo-Nirenberg interpolation inequality, Calder{\'o}n-Zygmund and \eqref{eq:Wrequirement}, we have
$$ \| \nabla W \|_{L^\infty} \leq C\| \nabla ^2 W\|_{L^4}^\theta \|\nabla W\|_{L^Q}^{1-\theta} \leq C\| \nabla \curl W \|_{L^4}^\theta \|\curl W\|_{L^Q}^{1-\theta} \leq e^{a\tau}$$ 
for a suitable choice of $\theta \in (0,1)$. We estimate
\begin{equation*}
\begin{split}
\|  \p_i ( U^{\rm lin} + W) \cdot \nabla \Omega \|_{L^p} &\leq  \big( \| \nabla U^{\rm lin}  \|_{L^\infty}  +  \| \nabla W \|_{L^\infty}\big) \|\nabla \Omega \|_{L^p} 
\\&\leq e^{a\tau} e^{(a + \delta) (\tau-\tau_0)} 
 \| \Omega_0 \|_X.
\end{split}
\end{equation*}

Finally, as regards the force appearing in \eqref{eqn:for-grad}, we use the baseline estimate \eqref{eqn:baseline-est} and \eqref{eqn:omega-4-impr}. For the first term in the force, also by Calder{\'o}n-Zygmund, we have 
\begin{equation*}
\begin{split}
\|  \p_i U \cdot \nabla \bar{\Omega} \|_{L^p}&\leq \|\nabla \bar \Omega (\cdot, \tau)\|_\infty \|\nabla U (\cdot, \tau)\|_{L^p} \leq C \|\Omega (\cdot, \tau)\|_{L^p}
\\&\leq  C(C_{\delta_0}+ \bar C e^{\zeta T_{\rm max}}) \bar C^{1/2}e^{(a+\delta)(\tau-\tau_0)} \| \Omega_0 \|_X.
\end{split}
\end{equation*}
For the second term in the force, given that 
$D^2 \bar \Omega \in L^1$ since $\Omega$ is smooth and compactly supported, and by the $L^\infty$ bound on $U$ in \eqref{eqn:U-infty} we obtain 
\begin{align}
\| U \cdot \nabla \p_i \bar{\Omega}\|_{L^p}&\leq\|U \|_{L^{\infty}} \|D^2 \bar \Omega (\cdot, \tau)\|_{L^p}
\leq C (\|\Omega\|_{L^Q}+ \|\Omega\|_{L^4})
\\& \leq C(1+ \bar C^{1/2} e^{\zeta T_{\rm max}}) \bar C^{1/2}e^{(a+\delta)(\tau-\tau_0)} \| \Omega_0 \|_X.
\end{align}



By the previous estimates, \eqref{eqn:full-grad} rewrites for $p=2$ as

\begin{equation}\label{eqn:full-grad2}
\begin{aligned}
	&\frac{1}{2} \frac{d}{d\tau} \int |\nabla \Omega|^2 \, d\xi 
	-\int |\nabla \Omega|^2 \, d\xi + \int e^{\tau \gamma} |\Lambda^{\beta/2} \nabla \Omega|^2 \, d\xi  \\
& \leq C[ (1+ \bar C^{1/2} e^{2\zeta T_{\rm max}})\bar C^{1/2} e^{(a+\delta)(\tau-\tau_0)}   \| \Omega_0 \|_X ]^2.
	\end{aligned}
\end{equation}

We now observe that, when applying the formula \eqref{eqn:full-grad} for $p=4$, the term related to the fractional dissipation (last term in first line) is nonnegative. This is a technical computation that can be seen, for instance, through the Caffarelli extension $(\p_i \Omega)^*$ of $\p_i \Omega$ in $\{ (\xi,\xi_3) \in \R^2 \times \R: y \geq 0\}$ and through the characterization of the fractional laplacian in terms of the extension as follows.
By means of the divergence theorem, we compute 
\begin{align*}
-&\sum_{i=1}^2\int_{\R^2}   \Lambda^{\beta} \p_i \Omega(\xi,\tau) |\nabla \Omega|^{ 2}(\xi,\tau) \p_i \Omega(\xi,\tau) \, dx \\
&= -C\sum_{i=1}^2 \lim_{\xi_3 \to 0^+} \int_{\R^2} \xi_3^b \partial_{\xi_3}  (\p_i \Omega)^*(x,\xi_3,t)  |(\nabla \Omega)^*|^{ 2}(\xi,\tau) (\p_i \Omega)^*(\xi,\tau)\, dx \\
&= C \sum_{i=1}^2\int_{\R^3_+} \overline{\div} ( \xi_3^b \overline{\nabla}  (\p_i \Omega)^*(x,\xi_3,t)  |(\nabla \Omega)^*|^{ 2}(\xi,\tau) (\p_i \Omega)^*(\xi,\tau) ) \, dx \, d\xi_3 
 \\
&= 
 C \sum_{i=1}^2\int_{\R^3_+} ( \xi_3^b \overline{\nabla} \frac{ (\p_i \Omega)^*}{2} \cdot \overline{\nabla} [ |(\nabla \Omega)^*|^{ 2}(\xi,\tau) ]) 
+\xi_3^b |\overline{\nabla}  (\p_i \Omega)^*|^2  |(\nabla \Omega)^*|^{ 2} )  \, d\xi \, d\xi_3 \,,
  \\
&= 
 C \int_{\R^3_+} ( \xi_3^b \overline{\nabla} \frac{| ({\nabla} \Omega)^*|^2}{2} \cdot \overline{\nabla} [ |(\nabla \Omega)^*|^{ 2} ]
+\xi_3^b \Big(\sum_{i=1}^2|\overline{\nabla}  (\p_i \Omega)^*|^2\Big)  |(\nabla \Omega)^*|^{ 2} )  \, d\xi \, d\xi_3 \geq 0\,,
\end{align*}
where the constant depends only on $\beta$ and in the last two equalities we omitted to indicate that all integrands are evaluated at $(\xi,\xi_3,\tau)$. Thanks to the previous estimates, \eqref{eqn:full-grad} implies then
\begin{equation}\label{eqn:full-grad2}
\begin{aligned}
	&\frac{1}{4} \frac{d}{d\tau} \int |\nabla \Omega|^4 \, d\xi -
	\Big( 1 +\frac{1}{2\alpha } \Big)\int |\nabla \Omega|^4 \, d\xi 
 \leq C[ (1+ \bar C^{1/2} e^{2\zeta T_{\rm max}})\bar C^{1/2} e^{(a+\delta)(\tau-\tau_0)}   \| \Omega_0 \|_X ]^4.
	\end{aligned}
\end{equation}

Integrating the previous inequality and \eqref{eqn:full-grad2} between $\tau_0$ and $\tau$, and taking advantage of the fact that the second term in the right-hand side does not influence the final estimate via Gronwall, we deduce that \eqref{ts:grad1} holds and, with $p=2$, that also \eqref{ts:grad2} holds. \end{proof}

%
%
%

\section{Nonlinear theory}\label{sec:nonlin}

In this section, we complete the proof of Theorem~\ref{thm:main}.


\begin{proof}[Proof of Theorem~\ref{thm:main}] Let $\bar \Omega, a, b, \eta, m, \delta_0$ be fixed as in the beginning of Section~\ref{sec:linear}. 

\emph{Step 1. Construction of two distinct solutions via a limiting procedure}. For $k \in \N$, consider $t_k := e^{-k}$. We solve the Cauchy problem for the fractional Navier-Stokes equations~\eqref{eq:NS} with smooth `initial' vorticity $u^{(k)}(\cdot,t_k) = \bar{u}(\cdot,t_k) + u^{\rm lin}(\cdot,t_k)$ and smooth forcing term $f$ defined in Lemma~\ref{lem:forcinglemma}.
For each $k$, a short-time solution $u^{(k)}$ on $\R^2 \times (t_k,T_k)$ exists and is unique in $C([t_k,T_k];L^2 \cap W^{1,4})$, among other function spaces, including $X$.\footnote{When $\beta \leq 1$, one must argue existence and uniqueness in a quasilinear fashion, whereas when $\beta \in (1,2)$ it is possible to develop the well-posedness theory in a semilinear fashion.} Furthermore, these solutions are regular enough to demonstrate energy conservation:
\begin{equation}
	\label{eq:energyconservation}
	\frac{1}{2} \int |u^{(k)}|^2(x,t') \, dx + \int_{t}^{t'} \int |\Lambda^{\frac{\beta}{2}} u^{(k)}|^2 \, dx \,ds = \frac{1}{2} \int |u^{(k)}|^2(x,t) \, dx + \int_{t}^{t'} \int f \cdot u^{(k)} \, dx \, ds.
\end{equation}

 Our desired second solution, violating uniqueness, should arise as $k \to +\infty$. For this purpose, it is necessary to continue the solutions up to some time $\bar t$ independent of $k$. In particular, we will demonstrate that there exists a time $\bar{t} > 0$, independent of $k$, such that the solutions $u^{(k)}$ can be continued to $\R^2 \times (t_k,\bar{t})$, and writing $u^{(k)} = \bar{u} + u^{\rm lin}+ u^{\rm per,(k)}$, the associated vorticity in self-similar variables $\Omega^{\rm per,(k)}$ solves \eqref{eqn:omega-per} in $[-k, \bar \tau]$ and satisfies
\begin{equation}
	\label{eq:mainestimateattheendofthepaper}
	\| \Omega^{{\rm per},(k)}(\cdot,\tau) \|_{X} \leq e^{\tau(a + \delta_0)}, \quad \tau \in [-k,\bar{\tau}],
\end{equation}
where $\bar{\tau} = \log \bar{t}$.

Once we have the above estimate, we may employ standard weak compactness arguments to take the limit $k\to \infty$ and obtain a solution $u= \bar{u} + u^{\rm lin}+ u^{\rm per}$ satisfying the estimate~\eqref{eq:mainestimateattheendofthepaper} with $\Omega^{\rm per}$ instead of $\Omega^{{\rm per},(k)}$ on the time interval $(-\infty,\bar{\tau}]$. In particular, we will have
\begin{equation}
	\| \Omega(\cdot,\tau) - \bar{\Omega}(\cdot,\tau) \|_X \geq \| \Omega^{\rm lin}(\cdot,\tau) \|_X - \| \Omega^{{\rm per}}(\cdot,\tau) \|_{X} \geq Ce^{\tau a} - e^{\tau(a + \delta_0)} > 0 \, ,
\end{equation}
when $\tau \ll 0$, which 
implies non-uniqueness. To demonstrate that $u$ is a Leray solution of~\eqref{eq:NS}, we use the energy conservation~\eqref{eq:energyconservation} with initial time $t = t_k$. The first term on the right-hand side of~\eqref{eq:energyconservation} is simply $\| \bar{u}(\cdot,t_k) + u^{\rm lin}(\cdot,t_k) \|^2_{L^2}/2$, which converges to zero as $t_k \to 0^+$. Hence, we have the energy equality
\begin{equation}
	\label{eq:energyconservation2}
	\frac{1}{2} \int |u|^2(x,t') \, dx + \int_{0}^{t} \int |\Lambda^{\frac{\beta}{2}} u|^2 \, dx \,ds = \int_{0}^{t} \int f \cdot u \, dx \, ds \, .
\end{equation}

\emph{Step 2. The estimate~\eqref{eq:mainestimateattheendofthepaper} via Duhamel's formula}. It remains to verify~\eqref{eq:mainestimateattheendofthepaper}. By continuity in $X$, the estimate~\eqref{eq:mainestimateattheendofthepaper} holds on a small neighborhood of $-k$. We define $\bar \tau_k>-k$ to be the largest non-positive time such that~\eqref{eq:mainestimateattheendofthepaper} holds and we claim that there exists $\bar \tau$ independent of $k$ (which will be fixed at the end of the proof) such that $\bar \tau_k \geq \bar \tau$. 
Therefore, in $[-k, \bar \tau_k]$ we are in the position to apply Duhamel's formula and the assumptions of Proposition~\ref{pro:transportestimate} applied with $\delta =\delta_0$ and $W = U^{\rm per}$ satisfying the requirement~\eqref{eq:Wrequirement}. We conclude that for every  $\tau \in[-k, \bar \tau_k]$,
\begin{equation}
	\Omega^{\rm per}(\cdot,\tau) = \int_{-k}^\tau S[U^{\rm per}](\tau,s) [-(U^{\rm per}+ U^{\rm lin}) \cdot \nabla \Omega^{\rm lin} +e^{s \gamma} \Lambda^{\beta} \Omega^{\rm lin}](\cdot,s) \, ds \, ,
\end{equation}
and that
\begin{align}\label{eqn:duhamel-nonlin-est}\nonumber
	\|\Omega^{\rm per}(\cdot,\tau)\|_{X}&\leq \int_{-k}^\tau \|S[U^{\rm per}](\tau,s)[-(U^{\rm per}+ U^{\rm lin}) \cdot \nabla \Omega^{\rm lin} +e^{s \gamma} \Lambda^{\beta} \Omega^{\rm lin}](\cdot,s) \|_{X} \, ds
\\&
\leq \int_{-k}^\tau e^{(a_0+\delta_0)( \tau - s)}\|-(U^{\rm per}+ U^{\rm lin}) \cdot \nabla \Omega^{\rm lin} +e^{s \gamma} \Lambda^{\beta} \Omega^{\rm lin}\|_{X}(\cdot,s) \, ds \, .
\end{align}

We claim that the integrands in the right-hand side satisfy
\begin{equation}\label{eqn:force-claim}
	\| (U^{\rm per}+ U^{\rm lin}) \cdot \nabla \Omega^{\rm lin}(\cdot,s) \|_X + \| e^{s \gamma} \Lambda^{\beta} \Omega^{\rm lin}(\cdot,s) \|_X \les e^{s(a+2\delta_0)} \, .
\end{equation}

Indeed, 
by the eigenfunction properties in Proposition~\ref{pro:spectral} as well as the structure of $\eta$ in the variable $\theta$ and elementary interpolation, we have
\begin{equation}
	\| e^{s \gamma} \Lambda^{\beta} \Omega^{\rm lin}(\cdot,s) \|_X \leq C e^{s (a+\gamma)}(\| \eta\|_{C^3({\rm supp} \eta) }+ \|\partial_\theta \eta\|_{C^2({\rm supp} \eta) }) \leq C e^{s (a+\gamma)}.
\end{equation}
By \eqref{eqn:U-infty} and \eqref{eqn:p-theta-W-infty} applied to $U^{\rm per}$, the bound \eqref{eq:mainestimateattheendofthepaper} which by assumption holds in $[-k, \bar \tau_k]$, and the bounds on $\Omega ^{\rm lin}$ in terms of $e^{a_0 s}$, we estimate all the terms composing $\| (U^{\rm per}+ U^{\rm lin}) \cdot \nabla \Omega^{\rm lin} (\cdot,s)\|_{X}$ as
\begin{equation*}
    \begin{split}
        \| (U^{\rm per}+ U^{\rm lin}) \cdot \nabla \Omega^{\rm lin}\|_{L^p} &\leq \big(\| U^{\rm per}\|_{L^\infty}+\| U^{\rm lin}\|_{L^\infty}\big) \| \Omega^{\rm lin}\|_{L^p}
\\&
\leq 
C \big(\|\Omega^{\rm per}\|_X +\|\Omega^{\rm lin}\|_X \big) \|\Omega^{\rm lin}\|_{C^2({\rm supp} \eta)}
\leq C e^{2a_0s}   
    \end{split}
\end{equation*}
$$\| \big(\partial_\theta U^{\rm per}+\partial_\theta U^{\rm lin}\big) \cdot \nabla  \Omega^{\rm lin}\|_{L^p} \leq \big(\| \partial_\theta U^{\rm per}\|_{L^\infty} \|+\| \partial_\theta U^{\rm lin}\|_{L^\infty}\big) \| \nabla \Omega^{\rm lin}\|_{L^p}
\leq 
C e^{2a_0s}
$$
$$\|  \big(U ^{\rm per}+U ^{\rm lin}\big)\cdot  \partial_\theta \nabla \Omega^{\rm lin}\|_{L^p} \leq \big( \|  U^{\rm per}\|_{L^\infty}+\|  U^{\rm lin}\|_{L^\infty}\big) \|\partial_\theta \nabla \Omega^{\rm lin}\|_{L^p}
\leq C e^{2a_0s} 
$$
for $p=Q,2,4$ and
\begin{equation*}
\begin{split}
\| \big( \nabla &U^{\rm per} + \nabla U^{\rm lin} \big) \cdot  \nabla \Omega^{\rm lin}\|_{L^p} + \| \big( U^{\rm per}+U^{\rm lin}\big) \cdot  \nabla^2 \Omega^{\rm lin}\|_{L^p}
\\&\leq \big(\|  \nabla U^{\rm per}\|_{L^p}+\|  \nabla U^{\rm lin}\|_{L^p}\big) \| \nabla \Omega^{\rm lin}\|_{L^\infty} +\big( \|  U^{\rm per}\|_{L^\infty}+\|  U^{\rm lin}\|_{L^\infty}\big) \| \nabla^2 \Omega^{\rm lin}\|_{L^p}
\\&\leq C e^{2a_0s} 
\end{split}
\end{equation*}
%
for $p=2,4$.

Therefore, since our choice of $\delta_0$ satisfies $\delta_0 \leq \frac 1 4 \min\{\gamma, a_0\}$, the previous inequalities imply the validity of \eqref{eqn:force-claim}. We obtain then from \eqref{eqn:duhamel-nonlin-est} and these estimates that 
$$\|\Omega^{\rm per}(\cdot,\tau)\|_{X} \leq C e^{(a_0+\delta_0)\bar \tau} \int_{-\infty}^\tau e^{\delta_0 s} \, ds \leq C  e^{(a_0+2 \delta_0)\bar \tau}$$
for every $\tau \in [-k, \bar \tau_k]$.
Choosing finally $\bar \tau$ so that $C e^{\delta_0\bar \tau}<1/2$, where $C$ is the constant appearing in the right-hand side of the previous formula, we find that $\bar \tau_k\geq \bar \tau$. \end{proof}









\subsubsection{Acknowledgments} DA was supported by NSF Postdoctoral Fellowship  Grant No.\ 2002023 and Simons Foundation Grant No.\ 816048.
MC was supported by the SNSF Grant 182565.

\bibliographystyle{alpha} 
\bibliography{nonuniquebibliography}

\end{document}